\begin{document}

\title{The decompositions and positive semidefiniteness of fourth-order conjugate partial-symmetric tensors with applications\thanks{This work was funded by the National Natural Science Foundation of China (Grant No. 11671217, No. 12071234).}
}

\titlerunning{Fourth-order conjugate partial-symmetric tensors}        

\author{Pengfei Huang         \and
        Qingzhi Yang 
}


\institute{Corresponding author. Pengfei Huang \at
              School of Mathematical
Sciences and LPMC, Nankai University, Tianjin 300071, P.R. China. \\
              \email{huangpf@mail.nankai.edu.cn}. ORCID: 0000-0003-3097-1804.
           \and
           Qingzhi Yang \at
              School of Mathematical
Sciences and LPMC, Nankai University, Tianjin 300071, P.R. China. \\
              \email{qz-yang@nankai.edu.cn}
}

\date{Received: date / Accepted: date}

\maketitle

\begin{abstract}
  Conjugate partial-symmetric (CPS) tensor is a generalization of Hermitian matrices. For the CPS tensor decomposition some properties are presented. For real CPS tensors in particular, we note the subtle difference from the complex case of the decomposition. In addition to traditional decompositions in the form of the sum of rank-one tensors, we focus on the orthogonal matrix decomposition of CPS tensors, which inherits nice properties from decomposition of matrices. It then induces a procedure that reobtain the CPS decomposable property of CPS tensors. We also discuss the nonnegativity of the quartic real-valued symmetric conjugate form corresponding to fourth-order CPS tensors in real and complex cases, and establish its relationship to different positive semidefiniteness based on different decompositions. Finally, we give some examples to illustrate the applications of presented propositions.
\keywords{conjugate partial-symmetric tensor \and tensor decomposition \and rank \and positive semidefiniteness}
\subclass{15A69 \and 15B48 \and 15B57 \and 15A03}
\end{abstract}

\section{Introduction}
Tensor decompositions have significant applications in computer vision, data mining, statistical estimation and so on. There have been rich researches on different decompositions and ranks correspondingly, such as CANDECOMP/PARAFAC (CP) decomposition, Tucker decomposition, matrix decomposition and so on \citep{sidiropoulos2017tensor,kolda2009tensor,cichocki2015tensor,jiang2015tensor}. These different decompositions help with revealing structure characterization of tensors from different aspects.

For tensors with symmetric properties, it is natural to expect the decomposition contains the same symmetric information. For instance, it is well known that for a symmetric tensor, there always exists a symmetric CP decomposition \citep{comon2008symmetric}. \cite{ni2019hermitian,nie2020hermitian} introduced the Hermitian tensors, which is closely related to the quantum physics. They asserted that for the Hermitian tensor, it still has a Hermitian decomposition. \cite{jiang2016characterizing,fu2018decompositions} studied the conjugate partial-symmetric (CPS) tensors, characterizing real-valued complex polynomial functions. They found that the CPS tensor also enjoys the CPS decompositions. CPS tensors are a special class of Hermitian tensors, thus their decompositions inherit some properties from the Hermitian tensors while preserving differences. For real Hermitian tensors, \cite{nie2020hermitian} fully characterized the ones that have real Hermitian decomposition. For real CPS tensors that are not symmetric, it may deserve discussion about the real decomposition form for them.

As the simplest even-order tensor next to the matrix, the fourth-order tensor can be used to model many multi-dimensional data from the real practice, including MIMO radar waveform minimization \citep{aubry2013ambiguity}, blind source identification \citep{de2007fourth}, quadratic eigenvalue problems \citep{tisseur2001quadratic}, among many others. It is expected to study whether some nice properties for matrix decompositions still hold for fourth-order tensors or not. The orthogonal matrix decomposition of fourth-order tensors based on the square unfolding is of particular interest in this sense. See for instance \citep{jiang2018low,de2007fourth,basser2007spectral}. As is well-known, polynomials can be represented by tensors and many of the application scenarios mentioned above are polynomial optimization problems. There is a considerable amount of research attention on polynomial optimization, including the approximation algorithms like \citep{luo2010semidefinite,ling2010biquadratic} and the will-studied lasserre hierarchy method \citep{lasserre2001global}. See \citep{laurent2009sums} and the reference therein for more information. And the nonnegativity is naturally associated with the quartic polynomial functions, corresponding to the fourth-order tensor, which is an important aspect of polynomials and closely related to optimizing the polynomial function. \cite{jiang2017cones} conducted a systematic study on the cones of real nonnegative quartic forms in the term of real fourth-order symmetric tensors, and they gave the relationship between positive semidefinite quartic forms and the sum-of-squares (SOS) of polynomials. For complex case, \cite{jiang2016characterizing} then raised the questions about the nonnegativity of the real-valued conjugate complex polynomials and SOS property naturally. So it is worth establishing a similar relationship as the real case in the term of fourth-order CPS tensors.

In this paper, we focus on the fourth-order CPS tensor. We further study the properties of the CPS decomposition of CPS tensors based on some existing results in literatures and particularly we note the special property in real CPS tensors. The orthogonal matrix decomposition for fourth-order CPS tensor is presented and we explore its link to the CPS decomposition. Based on different decompositions, we then study the positive semidefiniteness for CPS tensors on complex and real fields, respectively, which connect to the nonnegativity of quartic conjugate complex forms.

This paper is organized as follows. Several notations and definitions are provided in section \ref{sec:pre}. Some properties of two different decompositions for fourth-order CPS tensors, the CPS decomposition and the orthogonal matrix decomposition are presented in section \ref{sec:cps} and section \ref{sec:matrix}, respectively. In section \ref{sec:psd}, we study different positive semidefiniteness of CPS tensors in real and complex cases and their relational structures. Some examples of CPS tensors about their decomposition and positive semidefiniteness are given in section \ref{sec:app}. It is concluded in section \ref{sec:conclude}, with a list of some open problems for future work.

\section{Preparations}\label{sec:pre}
All tensors considered in this paper are fourth-order. Let $\mathbb{F}$ be $\mathbb{R}$ or $\mathbb{C}$. For any vector $x\in\mathbb{C}^n$, we denote $x^H:=\bar{x}^T$, where $\bar{x}$ denotes the conjugate of $x$. Its real and imaginary parts are denoted by $Re(x)$ and $Im(x)$, respectively. $\mathcal{S}_{\mathbb{F}}^{n}$ denotes the set of all $n$ by $n$ symmetric matrices with entries in $\mathbb{F}$ field. $\mathcal{S}_+^n~(\mathcal{H}_+^n)$ denotes the set of all $n$ by $n$ real symmetric (complex Hermitian) positive semidefinite matrices. $\mathcal{A}\in \mathbb{F}^{n^4}$ is a fourth-order n dimensional tensor and $\mathcal{A}_{ijkl}$ is an element of it. $\mathcal{A}$ is called symmetric tensor if $\mathcal{A}_{ijkl}$ is invariant for all permutations of $(i,j,k,l)$. The set of symmetric fourth-order tensors in $\mathbb{F}^{n^4}$ is denoted by $\mathbb{F}^{n^4}_s$. "$\boxtimes$" denotes the Kronecker product for matrices.

\cite{ni2019hermitian} introduced the Hermitian tensor and Hermitian decomposition. Definitions \ref{def:hermitian} and \ref{def:hermitian_decomp} below are explicitly for the fourth-order tensor.
\begin{definition}\label{def:hermitian}
$\mathcal{A}\in \mathbb{C}^{n^4}$ is called a Hermitian tensor if
\[\mathcal{A}_{i_1i_2j_1j_2}=\bar{\mathcal{A}}_{j_1j_2i_1i_2},\]
for all labels $i_1,~i_2,~j_1,~j_2$ in the range. A Hermitian tensor $\mathcal{A}\in \mathbb{R}^{n^4}$ is called a real Hermitian tensor. The set of all Hermitian tensors in $\mathbb{F}^{n^4}$ is denoted as $\mathbb{F}^{n^4}_h$.
\end{definition}
\begin{definition}\label{def:hermitian_decomp}
For a Hermitian tensor $\mathcal{A}\in \mathbb{C}^{n^4}_h$, if it can be written as
\begin{equation*}\label{equ:hermitian_decomp}
\mathcal{A}=\sum\limits_{i=1}^r\lambda_iu_i^1\otimes u_i^2\otimes \bar{u}_i^1\otimes\bar{u}_i^2,
\end{equation*}
where $\lambda_i\in \mathbb{R}$ and $u_i^j\in\mathbb{C}^n$, for $i=1,2,\cdots,r$, then $\mathcal{A}$ is called Hermitian decomposable. The smallest $r$ is called the Hermitian rank of $\mathcal{A}$, which is denoted as $rank_{h}(\mathcal{A})$. The Hermitian decomposition with minimum $r$ is called a Hermitian rank decomposition of $\mathcal{A}$.
\end{definition}
\begin{definition}\cite[Definition 3.1]{nie2020hermitian}
A tensor $\mathcal{A}\in\mathbb{R}_h^{n^4}$ is called $\mathbb{R}$-Hermitian decomposable if
\[\mathcal{A}=\sum\limits_{i=1}^r\lambda_iu_i^1\otimes u_i^2\otimes u_i^1\otimes u_i^2,~\lambda_i\in \mathbb{R},~u_i^j\in\mathbb{R}^n,~i=1,2,\cdots,r.\]
The smallest such $r$ is called the $\mathbb{R}$-Hermitian rank of $\mathcal{A}$, which we denoted by $rank_{h\mathbb{R}}(\mathcal{A})$. The $\mathbb{R}$-Hermitian decomposition with minimum $r$ is called a $\mathbb{R}$-Hermitian rank decomposition of $\mathcal{A}$.
\end{definition}

A special class of Hermitian tensors, called conjugate symmetric tensors, is introduced by \cite{jiang2016characterizing} as follows.
\begin{definition}
$\mathcal{A}\in \mathbb{F}^{n^4}$ is called conjugate partial-symmetric (CPS) if it is Hermitian and
\begin{equation*}
\mathcal{A}_{ijkl}=\mathcal{A}_{\pi(ij)\pi(kl)},~1\le i,j,k,l\le n,
\end{equation*}
where $\pi(ij)$ and $\pi(kl)$ are any permutation of $\{i,j\}$ and $\{k,l\}$, respectively. The set of CPS tensors in $\mathbb{F}^{n_4}$ is denoted by $\mathbb{F}^{n^4}_{cps}$.
\end{definition}

For two tensors $\mathcal{A},~\mathcal{B}\in \mathbb{C}^{n^4}_{cps}$, their inner product is defined as
\[\left<\mathcal{A},\mathcal{B}\right>=\sum\limits_{i,j,k,l}\mathcal{A}_{ijkl}\bar{\mathcal{B}}_{ijkl}.\]
The Frobenius norm of $\mathcal{A}$ is accordingly defined as $\|\mathcal{A}\|_F:=\sqrt{\left<\mathcal{A},\mathcal{A}\right>}$. For complex matrix $X\in\mathbb{C}^{n\times n}$, $\left<X,\mathcal{A}X\right>=\left<\mathcal{A},\bar{X}\otimes X\right>=\sum_{ijkl}\mathcal{A}X_{ij}\bar{X}_{kl}$. "$\otimes$" denotes the tensor product. The quartic real-valued symmetric conjugate form \citep{jiang2016characterizing} corresponding to $\mathcal{A}\in\mathbb{C}_{cps}^{n^4}$ is defined as
\[\mathcal{A}(x,x,\bar{x},\bar{x})=\left<\mathcal{A},\bar{x}\otimes\bar{x}\otimes x\otimes x\right>
=\sum\limits_{i,j,k,l=1}^n\mathcal{A}_{ijkl}x_ix_j\bar{x}_k\bar{x}_l,~x\in\mathbb{C}^n.\]
For brevity, we use $x^{\otimes d}$ to stand for $\underbrace{x\otimes\cdots\otimes x}_{d}$.

The CP rank \citep{kolda2009tensor} of $\mathcal{A}\in \mathbb{C}^{n^4}$ is the smallest $r$ such that
\[\mathcal{A}=\sum\limits_{i=1}^ru_i^1\otimes u_i^2\otimes u_i^3\otimes u_i^4,\]
where $u^j_i\in\mathbb{C}^n$ for all $i,j$ in the range. We denote it as $rank(\mathcal{A})$.

\section{CPS decomposition and cps rank}\label{sec:cps}
This section studies some properties of the CPS decomposition for the CPS tensor. First, let us recall the following decomposition theorems for Hermitian and CPS tensors in fourth-order case.

\begin{theorem}\citep[Theorem 5.1]{ni2019hermitian}
Every Hermitian tensor $\mathcal{A}\in\mathbb{C}^{n^4}_h$ is Hermitian decomposable.
\end{theorem}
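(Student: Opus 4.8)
The plan is to pass from the tensor to a matrix via the square unfolding and exploit the spectral theorem for Hermitian matrices. Concretely, I would define the unfolding $M(\mathcal{A})\in\mathbb{C}^{n^2\times n^2}$ by grouping the first two indices into a single row index and the last two into a single column index, so that $M(\mathcal{A})_{(i_1i_2),(j_1j_2)}=\mathcal{A}_{i_1i_2j_1j_2}$. This map is an $\mathbb{R}$-linear bijection between $\mathbb{C}^{n^4}$ and $\mathbb{C}^{n^2\times n^2}$, and the Hermitian condition $\mathcal{A}_{i_1i_2j_1j_2}=\bar{\mathcal{A}}_{j_1j_2i_1i_2}$ is exactly the statement that $M(\mathcal{A})$ is a Hermitian matrix. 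Moreover, a rank-one term $\lambda\, u^1\otimes u^2\otimes\bar u^1\otimes\bar u^2$ unfolds to $\lambda\,(u^1\boxtimes u^2)(u^1\boxtimes u^2)^H$, a real multiple of a rank-one Hermitian matrix whose generating vector $u^1\boxtimes u^2\in\mathbb{C}^{n^2}$ is a Kronecker product, equivalently reshapes to a rank-one $n\times n$ matrix. Hence ``$\mathcal{A}$ is Hermitian decomposable'' translates into ``$M(\mathcal{A})$ can be written as $\sum_i\lambda_i w_iw_i^H$ with $\lambda_i\in\mathbb{R}$ and each $w_i\in\mathbb{C}^{n^2}$ decomposable (a Kronecker product).''

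Since $M(\mathcal{A})$ is Hermitian, it then suffices to prove that the real-linear span of $\{ww^H : w=p\boxtimes q,\ p,q\in\mathbb{C}^n\}$ is the whole real vector space of $n^2\times n^2$ Hermitian matrices. Once this spanning property is established, writing $M(\mathcal{A})$ as such a real combination and folding every term back through $M^{-1}$ yields the desired decomposition $\mathcal{A}=\sum_i\lambda_i u_i^1\otimes u_i^2\otimes\bar u_i^1\otimes\bar u_i^2$.

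The combinatorial heart, and the main obstacle, is to produce a real basis of Hermitian matrices from decomposable outer products. The naive route---spectral theorem $M=\sum_i\lambda_i v_iv_i^H$ followed by an SVD of each reshaped eigenvector---fails, because the cross terms it creates cannot be repaired by the usual polarization: from the identity $xy^H+yx^H=\tfrac12\bigl[(x+y)(x+y)^H-(x-y)(x-y)^H\bigr]$ one sees that polarizing introduces the vectors $x\pm y$, which are not decomposable even when $x,y$ are, since the support of any decomposable $p\boxtimes q$ is forced to be a combinatorial rectangle $\operatorname{supp}(p)\times\operatorname{supp}(q)$. I would therefore build the basis explicitly. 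Diagonal units come from $w=e_i\boxtimes e_j$. The Hermitian units between index pairs differing in only one coordinate, say $(i,j)$ and $(i',j)$, together with their imaginary counterparts, come by polarization from the decomposable vectors $(e_i\pm e_{i'})\boxtimes e_j$ and $(e_i+\mathrm{i}e_{i'})\boxtimes e_j$, which stay decomposable. The genuinely delicate case is a pair differing in both coordinates, $(i,j)$ versus $(i',j')$ with $i\ne i'$ and $j\ne j'$: no decomposable vector has support exactly $\{(i,j),(i',j')\}$. Here I would use the $2\times2$ rectangle $w=(e_i+e_{i'})\boxtimes(e_j+e_{j'})$, whose outer product $ww^H$ contains the wanted term $E_{(ij),(i'j')}+E_{(i'j'),(ij)}$ (with $E_{\alpha,\beta}$ the matrix unit having a single $1$ in position $(\alpha,\beta)$) together with diagonal and one-coordinate-differing terms already produced; subtracting those off isolates the both-differing Hermitian unit, and the sign-flipped rectangle $(e_i+e_{i'})\boxtimes(e_j-e_{j'})$ separates the two both-differing pairs $\{(i,j),(i',j')\}$ and $\{(i,j'),(i',j)\}$, with factors of $\mathrm{i}$ giving the imaginary parts. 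Collecting all of these shows the decomposable outer products span, which completes the proof; the bookkeeping in this last step is routine, its only subtlety being the rectangle-subtraction just described.
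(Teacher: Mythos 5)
The paper does not actually prove this statement---it is quoted by citation from \cite{ni2019hermitian}---so there is no in-paper argument to compare against; judging your proposal on its own terms, the overall strategy is sound but it contains one genuine error at exactly the step you flagged as delicate. Your reduction is correct: $M(\mathcal{A})$ is Hermitian iff $\mathcal{A}$ is, a term $\lambda\,u^1\otimes u^2\otimes\bar u^1\otimes\bar u^2$ unfolds to $\lambda ww^H$ with $w=u^1\boxtimes u^2$, and it suffices that the decomposable outer products $ww^H$ span, over $\mathbb{R}$, the Hermitian $n^2\times n^2$ matrices; the diagonal and one-coordinate-differing units are handled correctly. The failure is in the both-differing case: for \emph{any} real rectangle $w=p\boxtimes q$ with $p,q\in\mathbb{R}^n$, the $((i,j),(i',j'))$ entry of $ww^H$ is $p_ip_{i'}q_jq_{j'}$ and the $((i,j'),(i',j))$ entry is $p_ip_{i'}q_{j'}q_j$---identical. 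In particular your sign-flipped rectangle $(e_i+e_{i'})\boxtimes(e_j-e_{j'})$ places the coefficient $-1$ on \emph{both} $S_1:=E_{(ij),(i'j')}+E_{(i'j'),(ij)}$ and $S_2:=E_{(ij'),(i'j)}+E_{(i'j),(ij')}$, so it does not separate the two both-differing pairs, and no real-signed rectangle (hence no real linear combination of them) can. This is not a peripheral slip: the impossibility of separating $S_1$ from $S_2$ by real rectangles is precisely the obstruction behind Theorem \ref{thm:real_decomp}, whose condition \eqref{equ:real_hdecomp} demands $\mathcal{A}_{i_1i_2j_1j_2}=\mathcal{A}_{k_1k_2l_1l_2}$ whenever $\{i_s,j_s\}=\{k_s,l_s\}$---i.e.\ equal $S_1$- and $S_2$-coefficients---and it is exactly why not every \emph{real} Hermitian tensor is $\mathbb{R}$-Hermitian decomposable. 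Your argument, as written, would ``prove'' real decomposability too, which is false; that is the tell that the step is broken.

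The repair is small and uses the complex phases you exploited elsewhere, but in \emph{both} factors simultaneously. Take $w_2=(e_i+\mathrm{i}e_{i'})\boxtimes(e_j+\mathrm{i}e_{j'})$, whose nonzero entries are $1,\mathrm{i},\mathrm{i},-1$ at $(i,j),(i,j'),(i',j),(i',j')$: the $((i,j),(i',j'))$ entry of $w_2w_2^H$ is $1\cdot\overline{(-1)}=-1$ while the $((i,j'),(i',j))$ entry is $\mathrm{i}\cdot\overline{\mathrm{i}}=+1$, so $w_2w_2^H$ contributes $-S_1+S_2$ in the both-differing slots, whereas your $w_1=(e_i+e_{i'})\boxtimes(e_j+e_{j'})$ contributes $S_1+S_2$; subtracting the already-spanned diagonal and one-coordinate-differing units (including the imaginary one-coordinate units, which $w_2w_2^H$ also produces) then isolates $S_1$ and $S_2$ separately. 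Likewise $(e_i+\mathrm{i}e_{i'})\boxtimes(e_j+e_{j'})$ and $(e_i+e_{i'})\boxtimes(e_j+\mathrm{i}e_{j'})$ yield the sum and difference of the two imaginary Hermitian units $A_1=\mathrm{i}\bigl(E_{(i'j'),(ij)}-E_{(ij),(i'j')}\bigr)$ and $A_2=\mathrm{i}\bigl(E_{(i'j),(ij')}-E_{(ij'),(i'j)}\bigr)$. With this one-line substitution your spanning argument is complete and the theorem follows.
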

\begin{theorem}\citep[Theorem 3.2]{fu2018decompositions}\label{thm:cps_decomp}
A fourth-order tensor $\mathcal{A}\in \mathbb{C}^{n^4}$ is CPS if and only if it has the following CPS decomposition
\begin{equation*}
\mathcal{A}=\sum\limits_{i=1}^r\lambda_i a_i\otimes a_i\otimes \bar{a}_i\otimes\bar{a}_i,
\end{equation*}
where $\lambda_i\in \mathbb{R}$ and $a_i\in\mathbb{C}^n$ for $i=1,2,\cdots,r$.
\end{theorem}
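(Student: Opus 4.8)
The reverse (sufficiency) direction is immediate: each rank-one term $\lambda_i\,a_i\otimes a_i\otimes\bar a_i\otimes\bar a_i$ with $\lambda_i\in\mathbb{R}$ is itself CPS, since swapping the first two or the last two indices leaves the $a_i$-entries unchanged, while interchanging the two index blocks and conjugating replaces $a_i\otimes a_i$ by $\bar a_i\otimes\bar a_i$ and fixes the real scalar $\lambda_i$; as $\mathbb{C}^{n^4}_{cps}$ is a real vector space, any finite real combination remains CPS. The whole content is therefore the forward (necessity) direction, which I would prove through the square unfolding of $\mathcal{A}$.

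First I would form the matrix $M\in\mathbb{C}^{n^2\times n^2}$ with $M_{(ij),(kl)}=\mathcal{A}_{ijkl}$, grouping $(i,j)$ as a row multi-index and $(k,l)$ as a column multi-index. The Hermitian relation $\mathcal{A}_{ijkl}=\bar{\mathcal{A}}_{klij}$ says exactly that $M=M^{H}$, while the partial-symmetry relations $\mathcal{A}_{ijkl}=\mathcal{A}_{jikl}=\mathcal{A}_{ijlk}$ say that $M$ is invariant under the swaps $(ij)\mapsto(ji)$ on rows and $(kl)\mapsto(lk)$ on columns, i.e. $\Pi M=M=M\Pi$, where $\Pi$ is the orthogonal projector of $\mathbb{C}^{n}\otimes\mathbb{C}^{n}$ onto its symmetric subspace. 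Hence $\operatorname{range}(M)$ lies in the symmetric subspace, so every eigenvector of $M$ with nonzero eigenvalue is a symmetric $2$-tensor. Taking the spectral decomposition $M=\sum_i\lambda_i v_i v_i^{H}$ with $\lambda_i\in\mathbb{R}$ and reshaping each $v_i$ into an $n\times n$ matrix $V_i$, each $V_i$ is then a complex symmetric matrix, so by the Autonne--Takagi factorization I may write $V_i=\sum_s w_{is}w_{is}^{T}$. Substituting back, $\mathcal{A}$ becomes a real combination of the tensors $w_{is}\otimes w_{is}\otimes\bar w_{it}\otimes\bar w_{it}$.

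The main obstacle is exactly the cross terms $s\ne t$: the diagonal terms $s=t$ already have the target shape $w_{is}\otimes w_{is}\otimes\bar w_{is}\otimes\bar w_{is}$, but the off-diagonal ones do not. Because $M$ is Hermitian, these occur in conjugate pairs, so it suffices to realize each Hermitian combination $b\otimes b\otimes\bar c\otimes\bar c+c\otimes c\otimes\bar b\otimes\bar b$ as a real combination of target terms, and I would do this by polarization in a root-of-unity parameter. Setting $\phi(\omega)=(b+\omega c)\otimes(b+\omega c)\otimes\overline{(b+\omega c)}\otimes\overline{(b+\omega c)}$ and expanding, the coefficient of $b\otimes b\otimes\bar c\otimes\bar c$ is $\bar\omega^{2}$ and that of its conjugate partner is $\omega^{2}$, so averaging $\phi(\omega)$ over the fifth roots of unity against the weight $\omega^{2}+\omega^{-2}=2\operatorname{Re}(\omega^{2})$ isolates exactly this Hermitian pair. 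The decisive point is that this weight is \emph{real}, so the resulting scalars stay real, which is precisely what the statement demands; this is where the Hermitian symmetry enters essentially.

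A cleaner but nonconstructive route is a duality argument on the real inner-product space $\mathbb{C}^{n^4}_{cps}$: the tensors $a\otimes a\otimes\bar a\otimes\bar a$ span a subspace $S\subseteq\mathbb{C}^{n^4}_{cps}$, and if $\mathcal{B}\perp S$ then $\langle\mathcal{B},a\otimes a\otimes\bar a\otimes\bar a\rangle=\sum_{ijkl}\mathcal{B}_{ijkl}\bar a_i\bar a_j a_k a_l$ vanishes for every $a\in\mathbb{C}^n$; since the monomials $\bar a_i\bar a_j a_k a_l$ are linearly independent as functions on $\mathbb{C}^n$ and, up to symmetry multiplicities, their coefficients are the entries $\mathcal{B}_{ijkl}$, this forces $\mathcal{B}=0$, whence $S=\mathbb{C}^{n^4}_{cps}$ and the decomposition exists. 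I would present the unfolding argument as the main proof, since it is constructive and ties directly into the orthogonal matrix decomposition developed later, keeping the duality argument as a check.
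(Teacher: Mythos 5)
Your proof is correct and takes essentially the same route by which the paper reobtains this theorem in Section 4: square unfolding to a Hermitian matrix whose eigenvectors fold into complex symmetric matrices (Theorem \ref{thm:matrix_decomp}), Takagi/SSVD of each eigenmatrix (Lemma \ref{lem:complexsym}, Corollary \ref{thm:lowcpsdecomp}), and a roots-of-unity polarization resolving each Hermitian cross pair $p\otimes p\otimes\bar q\otimes\bar q+q\otimes q\otimes\bar p\otimes\bar p$ into CPS rank-one terms. The only cosmetic difference is that you average over fifth roots of unity with the real weight $2\,\mathrm{Re}(\omega^{2})$, whereas Corollary \ref{thm:construct} is exactly the fourth-roots version, using the vectors $p\pm q$, $p\pm iq$ with weights $\pm\frac{1}{4}$.
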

The smallest $r$ is called the CPS rank of $\mathcal{A}$ \citep{fu2018decompositions}, denoted as $rank_{cps}(\mathcal{A})$. Similar to the Hermitian rank decomposition, we call the CPS decomposition with minimum $r$ a CPS rank decomposition of $\mathcal{A}$.
In section \ref{sec:matrix} we will give a procedure to obtain a CPS decomposition based on the orthogonal matrix decomposition.
\subsection{Basic properties}
It is obvious that $rank(\mathcal{A})\le rank_h(\mathcal{A})\le rank_{cps}(\mathcal{A})$ for any $\mathcal{A}\in \mathbb{C}_{cps}^{n^4}$. It is hard in general to determine the Hermitian or CPS rank. \cite{nie2020hermitian} explored the Hermitian rank decompositions for basis tensors of the Hermitian tensor. We discuss the CPS rank decompositions for basis tensors in the CPS tensor analogously.

For a scalar $c\in \mathbb{C}$, denote $\mathcal{E}^{ijkl}(c)$ the CPS tensor in $\mathbb{C}^{n^4}$ such that all entries are zeros, except $(\mathcal{E}^{ijkl}(c))_{ijkl}=(\mathcal{E}^{ijkl}(c))_{jikl}
=(\mathcal{E}^{ijkl}(c))_{ijlk}=\overline{(\mathcal{E}^{ijkl}(c))_{klij}}=c$, $i,j,k,l=1,2,\cdots,n$.
$\mathbb{C}^{n^4}_{cps}$ is a vector space over $\mathbb{R}$, adopting the standard scalar multiplication and addition. The set
\begin{equation*}
E:=\{\mathcal{E}^{ijij}(1)\}_{1\le i\le j\le n}\cup\{\mathcal{E}^{ijkl}(1),\mathcal{E}^{ijkl}(\sqrt{-1})\}_{1\le i\le j\le n,i\le k\le l\le n,\{i,j\}\neq\{k,l\}}.
\end{equation*}
is the canonical basis for $\mathbb{C}^{n^4}_{cps}$. Its dimension is
\[\dim \mathbb{C}^{n^4}_{cps}=\frac{n(n+1)}{2}+\frac{n(n+1)}{2}(\frac{n(n+1)}{2}-1)=(\frac{n(n+1)}{2})^2.\]

\begin{proposition}\label{pro:basis_decomp}
$rank_{cps}(\mathcal{E}^{iiii}(1))=1$; $rank_{cps}(\mathcal{E}^{iijj}(1))=4~(i\neq j)$ and it has the following CPS rank decomposition,
\begin{equation}\label{equ:basis_decomp}
\begin{aligned}
\mathcal{E}^{iijj}(1)=&\frac{1}{4}[(e_i+e_j)^{\otimes 4}+(e_i-e_j)^{\otimes 4}\\
&-(e_i+ie_j)^{\otimes 2}\otimes(e_i-ie_j)^{\otimes 2}-(e_i-ie_j)^{\otimes 2}\otimes(e_i+ie_j)^{\otimes 2}],
\end{aligned}
\end{equation}
where $e_i$ is the ith identity vector.
\end{proposition}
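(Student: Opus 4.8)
The plan is to prove the two claims separately, and within the second to split off an easy upper bound from the genuinely hard lower bound. The statement $rank_{cps}(\mathcal{E}^{iiii}(1))=1$ is immediate: since $e_i$ is real we have $\overline{e_i}=e_i$, so $e_i\otimes e_i\otimes\overline{e_i}\otimes\overline{e_i}=e_i^{\otimes 4}$ is a single CPS rank-one term whose unique nonzero entry equals $1$ at position $(i,i,i,i)$; thus $\mathcal{E}^{iiii}(1)=1\cdot e_i^{\otimes 4}$, and the rank cannot be $0$ because the tensor is nonzero. Throughout the second part I would use the elementary fact that a CPS tensor is uniquely determined by its quartic conjugate form, since the coefficient of the monomial $x_ix_j\bar x_k\bar x_l$ collects exactly the entries related to $\mathcal{A}_{ijkl}$ by partial symmetry and Hermitian conjugation.

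For the upper bound I would avoid a direct entrywise comparison and instead verify the form. Each summand in \eqref{equ:basis_decomp} is a legitimate CPS rank-one term $\lambda\,a\otimes a\otimes\bar a\otimes\bar a$ with $\lambda\in\mathbb{R}$: namely $a=e_i\pm e_j$ with $\lambda=\tfrac14$, and $a=e_i\pm ie_j$ with $\lambda=-\tfrac14$, using $\overline{e_i\pm ie_j}=e_i\mp ie_j$. The associated form is $\tfrac14\bigl(|x_i+x_j|^4+|x_i-x_j|^4-|x_i+ix_j|^4-|x_i-ix_j|^4\bigr)$, and expanding via $|x_i\pm x_j|^2=|x_i|^2+|x_j|^2\pm 2\mathrm{Re}(x_i\bar x_j)$ and $|x_i\pm ix_j|^2=|x_i|^2+|x_j|^2\pm 2\mathrm{Im}(x_i\bar x_j)$ collapses it to $2\bigl(\mathrm{Re}(x_i\bar x_j)^2-\mathrm{Im}(x_i\bar x_j)^2\bigr)=2\mathrm{Re}(x_i^2\bar x_j^2)=x_i^2\bar x_j^2+x_j^2\bar x_i^2$, which is precisely the form of $\mathcal{E}^{iijj}(1)$. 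Hence the two sides agree and $rank_{cps}(\mathcal{E}^{iijj}(1))\le 4$.

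The lower bound is the heart of the matter. First I would reduce to $n=2$: the form of $\mathcal{E}^{iijj}(1)$ depends only on $x_i,x_j$, so restricting any decomposition $\sum_{k=1}^r\lambda_k a_k^{\otimes2}\otimes\bar a_k^{\otimes2}$ to these two coordinates represents the two-dimensional tensor $T\in\mathbb{C}^{2^4}_{cps}$ with $T_{1122}=T_{2211}=1$ using the same number $r$ of terms; since an embedding gives the reverse inequality, it suffices to show $rank_{cps}(T)\ge 4$. Writing $a_k=(\alpha_k,\beta_k)$ and $p_k=(\alpha_k^2,\,2\alpha_k\beta_k,\,\beta_k^2)^T$, the identity $T=\sum_k\lambda_k a_k^{\otimes2}\otimes\bar a_k^{\otimes2}$ is equivalent, through the square unfolding in the monomial basis $(x_1^2,x_1x_2,x_2^2)$, to the Hermitian matrix equation $\sum_{k=1}^r\lambda_k p_kp_k^H=Q$, where $Q_{13}=Q_{31}=1$ and all other entries vanish. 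The vectors $p_k$ are constrained to the Veronese cone $V=\{(\alpha^2,2\alpha\beta,\beta^2)\}=\{4x_1x_3=x_2^2\}$. As $Q$ has inertia $(1,1,1)$, the unconstrained inertia bound only gives $r\ge 2$, so the constraint $p_k\in V$ must be exploited.

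To push this to $r\ge 4$ I would read off the vanishing diagonal of $Q$, giving $\sum_k\lambda_k|\alpha_k|^4=\sum_k\lambda_k|\alpha_k|^2|\beta_k|^2=\sum_k\lambda_k|\beta_k|^4=0$, i.e.\ $\sum_k\lambda_k w_kw_k^T=0$ in $\mathrm{Sym}^2(\mathbb{R}^2)$ for $w_k=(|\alpha_k|^2,|\beta_k|^2)$. Because $w\mapsto ww^T$ is the quadratic Veronese $\mathbb{R}^2\to\mathbb{R}^3$, and any three of its points with pairwise distinct ratios are linearly independent (a Vandermonde determinant), a nontrivial three-term relation forces a degeneracy: some $w_k=0$, or two of the $w_k$ proportional. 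One first rules out $r=2$, since rank $2$ would put both $p_k$ in $\mathrm{span}(e_1,e_3)$, forcing $\alpha_k\beta_k=0$ and hence $Q$ diagonal, contradicting $Q_{13}=1$. The hard part will be excluding $r=3$: after the modulus (diagonal) equations are used, the complex phases of $\alpha_k,\beta_k$ remain free, and in each degenerate configuration one must combine the off-diagonal conditions $\sum_k\lambda_k\alpha_k^2\bar\beta_k^2=1$ and $\sum_k\lambda_k\alpha_k^2\bar\alpha_k\bar\beta_k=0$ to reach a contradiction; representatively this reduces to showing a moment system of the form $\sum_k\lambda_k\zeta_k^m=\delta_{m,2}$ ($m=0,1,2$, $|\zeta_k|=1$, $\lambda_k\in\mathbb{R}$) has no three-node solution, which is where the genuine casework lies. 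Once $r\le 3$ is excluded, the explicit decomposition of the previous step yields $rank_{cps}(\mathcal{E}^{iijj}(1))=4$.
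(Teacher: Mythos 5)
Your first claim and the upper bound are fine: the form-level verification that the four displayed terms are legitimate CPS rank-one terms summing to $x_i^2\bar x_j^2+x_j^2\bar x_i^2$ is correct (and the bijection between CPS tensors and their conjugate forms, which you invoke to conclude equality of tensors, is indeed available). But the proposition's real content is the lower bound $rank_{cps}(\mathcal{E}^{iijj}(1))\ge 4$, and there your proof is not complete: after reducing to $n=2$, unfolding to $\sum_k\lambda_k p_kp_k^H=Q$ on the Veronese cone, and ruling out $r\le 2$, you explicitly leave the exclusion of $r=3$ open (``which is where the genuine casework lies''). As submitted, the proposal therefore does not prove $rank_{cps}=4$; it proves $rank_{cps}\in\{3,4\}$ together with a program for the remaining case. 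For what it is worth, your program does close: if exactly one pair of the $w_k=(|\alpha_k|^2,|\beta_k|^2)$ is proportional, the diagonal relation forces $\lambda_3=0$ by linear independence of the two distinct Veronese points; if all three are proportional (with no zero coordinate, else $Q$ would be diagonal), one arrives at your moment system $\sum_k\mu_k=\sum_k\mu_k\zeta_k=0$, $\sum_k\mu_k\zeta_k^2>0$ with $|\zeta_k|=1$, $\mu_k\in\mathbb{R}$, which has no solution because three \emph{distinct} points on the unit circle are never collinear, hence affinely independent in $\mathbb{C}\cong\mathbb{R}^2$, forcing all $\mu_k=0$, while coincident nodes collapse to the already-excluded $r\le 2$ situation. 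You would need to supply this casework (including the degenerate configurations with vanishing coordinates) to have a proof.

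It is also worth noting how differently the paper handles the lower bound: it does no casework at all. It observes that the displayed decomposition is simultaneously a Hermitian decomposition, cites Nie--Yang's Theorem~2.3 that $rank_h(\mathcal{E}^{iijj}(1))=4$ for $i\neq j$, and concludes via the trivial inequality $rank_h(\mathcal{A})\le rank_{cps}(\mathcal{A})$, since every CPS decomposition is in particular a Hermitian one. So the paper's argument is a two-line reduction to a known Hermitian-rank computation, whereas your route, once completed, would be self-contained and elementary --- a genuine alternative, but at the cost of exactly the phase/moment casework you deferred, which is the step the citation replaces.
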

\begin{proof}
$rank_{cps}(\mathcal{E}^{iiii}(1))=1$ is obvious, since $\mathcal{E}^{iiii}(1)=e_i^{\otimes 4}$. According to \cite[Theorem 2.3]{nie2020hermitian}, \eqref{equ:basis_decomp} is also a Hermitian rank decomposition for $\mathcal{E}^{iijj}(1)$, and $rank_h(\mathcal{E}^{iijj}(1))=4$ when $i\neq j$. Thus $rank_h(\mathcal{E}^{iijj}(1))\le rank_{cps}(\mathcal{E}^{iijj}(1))=4$.
\end{proof}

From Proposition \ref{pro:basis_decomp}, we have $rank_{cps}(\mathcal{E}^{iijj}(1))>rank(\mathcal{E}^{iijj}(1))=2~(i\neq j)$, since $\mathcal{E}^{iijj}(1)$ has a CP rank decomposition $\mathcal{E}^{iijj}(1)=e_i^{\otimes 2}\otimes e_j^{\otimes 2}+e_j^{\otimes 2}\otimes e_i^{\otimes 2}$.
For $\mathcal{E}^{iiij}(1)$, $i\neq j$, we can find a Hermitian decomposition of it,
\begin{equation*}
\begin{aligned}
\mathcal{E}^{iiij}(1)=&\frac{1}{2}[(e_i+e_j)\otimes e_i\otimes(e_i+e_j)\otimes e_i-(e_i-e_j)\otimes e_i\otimes(e_i-e_j)\otimes e_i\\
&+e_i\otimes(e_i+e_j)\otimes e_i\otimes(e_i+e_j)-e_i\otimes(e_i-e_j)\otimes e_i\otimes(e_i-e_j)].
\end{aligned}
\end{equation*}
It has been proved that $rank(\mathcal{E}^{iiij}(1))=rank_h(\mathcal{E}^{iiij}(1))=rank_{h\mathbb{R}}(\mathcal{E}^{iiij}(1))=4$ \citep{nie2020hermitian}. However we cannot figure out what the CPS rank of $\mathcal{E}^{iiij}(1)$.

In general, we can only determine the CPS rank in some special cases. There has been various conditions for the uniqueness and characterization of the CP rank decomposition, see the survey \citep{sidiropoulos2017tensor} for instance. The Kruskal-type theorem is one of the most well-known result. Let $k_U$ be the Kruskal rank of set $U$, which is the maximum number $k$ such that every subset of $k$ vectors in $U$ is linearly independent. For a CPS tensor, given a CPS decomposition, the following proposition gives a condition to judge whether that is a CPS rank decomposition.
\begin{proposition}
Let $\mathcal{A}=\sum_{i=1}^r\lambda_iu_i^{\otimes 2}\otimes\bar{u}_i^{\otimes2}$ be a CPS tensor, with $0\neq \lambda_i\in\mathbb{R}$. Let $U=\{u_1,\cdots,u_r\}$. If $2k_{U}\ge r+2$, then $rank_{cps}(\mathcal{A})=r$, and the CPS rank decomposition of $\mathcal{A}$ is unique up to permutation and scaling of decomposing vectors.
\end{proposition}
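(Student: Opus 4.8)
The plan is to recognize the given CPS decomposition as an ordinary order-four CP decomposition and then invoke the Kruskal-type uniqueness theorem in its extension to $N$-way arrays. First I would absorb the scalars into one factor and write
\[
\mathcal{A}=\sum_{i=1}^r(\lambda_iu_i)\otimes u_i\otimes\bar u_i\otimes\bar u_i,
\]
so that the four factor matrices are $A^{(1)}=[\lambda_1u_1,\dots,\lambda_ru_r]$, $A^{(2)}=[u_1,\dots,u_r]$, and $A^{(3)}=A^{(4)}=[\bar u_1,\dots,\bar u_r]$. Since $\lambda_i\neq0$, column scaling does not change the Kruskal rank, so $A^{(1)}$ and $A^{(2)}$ both have Kruskal rank $k_U$; and because complex conjugation is a bijection that preserves linear independence over $\mathbb{C}$ (from $\sum c_j\bar u_{i_j}=0$ one conjugates to get $\sum\bar c_ju_{i_j}=0$), the matrices $A^{(3)},A^{(4)}$ have Kruskal rank $k_{\bar U}=k_U$ as well.

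Next I would apply the generalized Kruskal condition, which for an order-$N$ array asks that the sum of the factor Kruskal ranks be at least $2r+(N-1)$. With $N=4$ this reads $4k_U\ge 2r+3$. Since $4k_U$ is even and $2r+3$ is odd, the smallest admissible even value is $2r+4$, so $4k_U\ge 2r+3$ is equivalent to $4k_U\ge 2r+4$, i.e. $2k_U\ge r+2$, which is exactly the hypothesis (so the stated bound is not merely sufficient but coincides with the Kruskal bound for this tensor). Hence the CP decomposition is essentially unique and $rank(\mathcal{A})=r$. Combining this with the elementary chain $rank(\mathcal{A})\le rank_h(\mathcal{A})\le rank_{cps}(\mathcal{A})\le r$, where the last inequality comes from the given $r$-term CPS decomposition, forces $rank_{cps}(\mathcal{A})=r$.

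It then remains to upgrade the CP uniqueness to CPS uniqueness. Taking any CPS rank decomposition $\mathcal{A}=\sum_{i=1}^r\mu_iv_i^{\otimes2}\otimes\bar v_i^{\otimes2}$, it has exactly $r$ terms and is itself an order-four CP decomposition, so Kruskal's theorem supplies a permutation $\sigma$ and per-mode diagonal scalars $d_1^{(i)},\dots,d_4^{(i)}$ with $\prod_md_m^{(i)}=1$ matching its factors to those above. Reading off mode $2$ gives $v_{\sigma(i)}=\alpha_iu_i$ with $\alpha_i:=d_2^{(i)}$; conjugating the mode-$3$ and mode-$4$ relations gives $d_3^{(i)}=d_4^{(i)}=\bar\alpha_i$; and mode $1$ gives $\mu_{\sigma(i)}\alpha_i=d_1^{(i)}\lambda_i$. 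Substituting into $\prod_md_m^{(i)}=1$ yields $\mu_{\sigma(i)}=\lambda_i/|\alpha_i|^4\in\mathbb{R}$, so the second decomposition is obtained from the first purely by permutation and the scaling $u_i\mapsto\alpha_iu_i$ (with $\lambda_i$ adjusted by the positive factor $|\alpha_i|^{-4}$), which is exactly the CPS scaling freedom.

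The main obstacle is this last step: Kruskal's theorem only guarantees uniqueness up to independent complex scalings in the four modes, and one must verify that imposing the CPS structure—the same vector in modes $1,2$ and its conjugate in modes $3,4$—ties these scalings together as $(\alpha_i,\alpha_i,\bar\alpha_i,\bar\alpha_i)$ and keeps the coefficients $\mu_i$ real. The only other point needing care is the Kruskal-rank bookkeeping, i.e. checking that absorbing $\lambda_i$ and conjugating two factors leaves all four Kruskal ranks equal to $k_U$, together with the parity remark that makes $2k_U\ge r+2$ identical to the order-four Kruskal inequality.
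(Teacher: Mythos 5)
Your proof is correct and takes essentially the same route as the paper: both reduce the statement to the Sidiropoulos--Bro generalization of Kruskal's theorem for four-way arrays by checking $k_U+k_U+k_{\bar U}+k_{\bar U}\ge 2r+3$ and then invoking uniqueness of the rank-$r$ CP decomposition. The paper compresses everything after that inequality into ``the proposition holds naturally,'' whereas you supply the details it omits---absorbing the $\lambda_i$ and noting that nonzero scaling and conjugation preserve Kruskal rank, the parity remark identifying $2k_U\ge r+2$ with the order-four Kruskal bound, and the mode-scaling bookkeeping that ties the four scalars to $(\alpha_i,\alpha_i,\bar\alpha_i,\bar\alpha_i)$ and forces $\mu_{\sigma(i)}=\lambda_i/|\alpha_i|^4\in\mathbb{R}$, so the uniqueness is genuinely of CPS type.
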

\begin{proof}
If $2K_U\ge r+2$, then
\[K_U+K_U+K_{\bar{U}}+K_{\bar{U}}>2r+3,\]
which satisfies the condition of the Kruskal theorem \citep{sidiropoulos2000uniqueness} for the uniqueness of the CP rank decomposition. The proposition holds naturally.
\end{proof}
\subsection{Real CPS decomposition}
For real CPS tensors, one may expect that the decomposition only involves real factors. However, $\sum_i\lambda_ia_i\otimes a_i\otimes\bar{a}_i\otimes\bar{a}_i$ can only represent real symmetric tensors when $\lambda_i\in R$, $a_i\in\mathbb{R}^n$. So the decomposition for real CPS tensors might has subtle differences from the complex case. As an example, we have the following proposition.
\begin{proposition}
For a real CPS tensor $\mathcal{A}\in\mathbb{R}_{cps}^{n^4}\backslash\mathbb{R}_s^{n^4}$, it holds $rank_{cps}(\mathcal{A})\ge 2$. That is, $\mathcal{A}$ cannot be decomposed as $\mathcal{A}=\lambda a\otimes a\otimes\bar{a}\otimes\bar{a}$ with $\lambda\in\mathbb{R},~a\in\mathbb{C}^n$.
\end{proposition}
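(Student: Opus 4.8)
The plan is to argue by contradiction. Suppose, contrary to the claim, that $rank_{cps}(\mathcal{A})\le 1$. Since $\mathcal{A}\in\mathbb{R}_{cps}^{n^4}\backslash\mathbb{R}_s^{n^4}$ is not symmetric and the zero tensor is symmetric, we have $\mathcal{A}\neq 0$; hence in fact $rank_{cps}(\mathcal{A})=1$, and by Theorem \ref{thm:cps_decomp} we may write $\mathcal{A}=\lambda\,a\otimes a\otimes\bar{a}\otimes\bar{a}$ with $0\neq\lambda\in\mathbb{R}$ and $0\neq a\in\mathbb{C}^n$, that is, $\mathcal{A}_{ijkl}=\lambda\,a_ia_j\bar{a}_k\bar{a}_l$. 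Note that a single term of this form is automatically Hermitian and partial-symmetric, so the CPS structure imposes nothing extra; the entire content lies in the requirement that every entry of $\mathcal{A}$ be \emph{real}. The goal is therefore to show that reality of the entries alone forces $\mathcal{A}$ to be symmetric, which contradicts the hypothesis.

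The key step is a phase-alignment argument designed to isolate the relative phases of the components of $a$ in one shot. Fix any index $m$ with $a_m\neq 0$, which exists because $a\neq 0$. Considering the entries $\mathcal{A}_{immm}=\lambda\,|a_m|^2\,a_i\bar{a}_m$ for $i=1,\dots,n$, their reality together with $\lambda|a_m|^2\neq 0$ forces $a_i\bar{a}_m\in\mathbb{R}$ for every $i$. Writing $a_i\bar{a}_m=:\rho_i\in\mathbb{R}$ and solving yields $a_i=\rho_i\,a_m/|a_m|^2$ for all $i$, so that $a=\mu\,s$ where $\mu:=a_m/|a_m|^2\in\mathbb{C}$ and $s:=(\rho_1,\dots,\rho_n)^T\in\mathbb{R}^n$. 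In other words, reality of the entries pins the generating vector $a$ down to a complex scalar multiple of a fixed real vector.

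Substituting $a=\mu s$ into the decomposition, the scalar phases cancel in conjugate pairs, $a\otimes a\otimes\bar{a}\otimes\bar{a}=\mu^2\bar{\mu}^2\,s^{\otimes 4}=|\mu|^4\,s^{\otimes 4}$, and hence $\mathcal{A}=(\lambda|\mu|^4)\,s^{\otimes 4}$ is a real scalar multiple of the fully symmetric real rank-one tensor $s^{\otimes 4}$. Thus $\mathcal{A}\in\mathbb{R}_s^{n^4}$, contradicting $\mathcal{A}\notin\mathbb{R}_s^{n^4}$, and therefore $rank_{cps}(\mathcal{A})\ge 2$. The argument is essentially elementary and I expect no genuine obstacle; the only point demanding care is the choice of entries in the key step—one must select entries whose reality directly encodes the pairwise phase relations among the components of $a$, and then verify that the phases cancel completely so that no nonsymmetric residue can survive.
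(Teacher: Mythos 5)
Your proof is correct and follows essentially the same route as the paper: both arguments assume $\mathcal{A}=\lambda\,a\otimes a\otimes\bar{a}\otimes\bar{a}$, use the reality of entries of the form $\mathcal{A}_{immm}$ (the paper equivalently equates $\mathcal{A}_{iiij}=\mathcal{A}_{ijii}$ via the real Hermitian symmetry) to force $a_i\bar{a}_j\in\mathbb{R}$, conclude that $a$ is a complex scalar multiple of a real vector $s$, and derive the contradiction $\mathcal{A}=\lambda|\mu|^4 s^{\otimes 4}\in\mathbb{R}_s^{n^4}$. Your anchoring at a single fixed index $m$ with $a_m\neq 0$ is a minor streamlining that handles zero components of $a$ slightly more cleanly than the paper's pairwise phase condition, but it is not a different method.
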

\begin{proof}
Suppose that $\mathcal{A}=\lambda a\otimes a\otimes\bar{a}\otimes\bar{a}$, where $\lambda\in\mathbb{R}$, $a\in\mathbb{C}^n$. Let $a_i=|a_i|e^{i\theta_i}$, then we have
\begin{equation*}
\mathcal{A}_{iiij}=|a_i|^3|a_j|e^{\theta_i-\theta_j}=|a_i|^3|a_j|e^{\theta_j-\theta_i}=\mathcal{A}_{ijii}.
\end{equation*}
Thus $\theta_i-\theta_j=k\pi,~k\in\mathbb{Z}$ when $|a_i||a_j|\neq0$. With not loss of generality, we can assume $a=e^{i\alpha}b$, where $b$ is given as $b_i=|a_i|e^{ik_i\pi},~k_i\in\mathbb{Z}$. Then $b$ is a real vector and $\mathcal{A}=\lambda b^{\otimes 4}$ is a real symmetric tensor. It contradicts our assumption in the proposition. Thus $rank_{cps}(\mathcal{A})\ge 2$.
\end{proof}
We can also obtain that $rank(\mathcal{A})\ge 2$ when $\mathcal{A}\in\mathbb{R}_{cps}^{n^4}\backslash\mathbb{R}_s^{n^4}$ analogously.

Nie et al. have shown that not every real Hermitian tensor is $\mathbb{R}$-Hermitian decomposable \citep{nie2020hermitian}. We recall the characterization of a $\mathbb{R}$-Hermitian decomposable tensors in $\mathbb{R}^{n^4}$ given by them as the following theorem.
\begin{theorem}\citep{nie2020hermitian}\label{thm:real_decomp}
A tensor $\mathcal{A}\in\mathbb{R}_h^{n^4}$ is $\mathbb{R}$-Hermitian decomposable,
if and only if
\begin{equation}\label{equ:real_hdecomp}
\mathcal{A}_{i_1i_2j_1j_2}=\mathcal{A}_{k_1k_2l_1l_2}
\end{equation}
for all labels such that $\{i_s,j_s\}=\{k_s,l_s\},~s=1,2$.
\end{theorem}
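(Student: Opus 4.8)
The plan is to pass between $\mathcal{A}$ and a matrix reshaping that groups the first and third modes together and the second and fourth modes together, under which both the defining symmetry \eqref{equ:real_hdecomp} and the $\mathbb{R}$-Hermitian rank-one tensors take a transparent form. Concretely, I would introduce the reshaping $T\in\mathbb{R}^{n^2\times n^2}$ defined by $T_{(i_1,j_1),(i_2,j_2)}:=\mathcal{A}_{i_1i_2j_1j_2}$, so that the row multi-index collects modes $1,3$ and the column multi-index collects modes $2,4$. The necessity direction is then a direct substitution: if $\mathcal{A}=\sum_{i=1}^r\lambda_i u_i^1\otimes u_i^2\otimes u_i^1\otimes u_i^2$, then $\mathcal{A}_{i_1i_2j_1j_2}=\sum_i\lambda_i(u_i^1)_{i_1}(u_i^2)_{i_2}(u_i^1)_{j_1}(u_i^2)_{j_2}$ is visibly invariant under $i_1\leftrightarrow j_1$ and under $i_2\leftrightarrow j_2$, which is exactly \eqref{equ:real_hdecomp} (note that the group generated by these two transpositions already contains the Hermitian swap $(13)(24)$, so the condition is consistent with $\mathcal{A}\in\mathbb{R}_h^{n^4}$).

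For the converse, the key observation is that \eqref{equ:real_hdecomp} is equivalent to $T$ being symmetric in its row multi-index $(i_1,j_1)$ and in its column multi-index $(i_2,j_2)$; that is, $T$ lies in the subspace $\mathcal{S}_\mathbb{R}^n\otimes\mathcal{S}_\mathbb{R}^n$ of $\mathbb{R}^{n^2}\otimes\mathbb{R}^{n^2}$. By the very definition of the tensor product, any such $T$ can be written as a finite sum $T=\sum_k A_k\otimes B_k$ with $A_k,B_k\in\mathcal{S}_\mathbb{R}^n$. Applying the real spectral theorem to each factor, $A_k=\sum_p\mu_{kp}a_{kp}a_{kp}^T$ and $B_k=\sum_q\nu_{kq}b_{kq}b_{kq}^T$ with $\mu_{kp},\nu_{kq}\in\mathbb{R}$ and $a_{kp},b_{kq}\in\mathbb{R}^n$, so that $T=\sum_{k,p,q}\mu_{kp}\nu_{kq}(a_{kp}a_{kp}^T)\otimes(b_{kq}b_{kq}^T)$. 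Finally I would reshape back: under the inverse reshaping a decomposable term $(aa^T)\otimes(bb^T)$ corresponds precisely to $a\otimes b\otimes a\otimes b$, since both have $(i_1,i_2,j_1,j_2)$-entry $a_{i_1}b_{i_2}a_{j_1}b_{j_2}$. Relabelling $\lambda_i:=\mu_{kp}\nu_{kq}$, $u_i^1:=a_{kp}$, $u_i^2:=b_{kq}$ then exhibits the desired $\mathbb{R}$-Hermitian decomposition of $\mathcal{A}$.

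The argument is essentially a reshaping combined with the spectral theorem, so no single step is genuinely hard; the points that require care are the bookkeeping of indices through the reshaping---in particular verifying that grouping modes $1,3$ and $2,4$ (rather than the $1,2$ versus $3,4$ square unfolding used elsewhere in the paper) is what converts \eqref{equ:real_hdecomp} into the two-sided symmetry of $T$---and the observation that the spectral decomposition must be allowed to be \emph{indefinite}, so that the scalars $\lambda_i$ range over all of $\mathbb{R}$. This last point is exactly what distinguishes $\mathbb{R}$-Hermitian decomposability from a positive semidefiniteness requirement, and it is why the statement characterizes membership in the linear subspace $\mathcal{S}_\mathbb{R}^n\otimes\mathcal{S}_\mathbb{R}^n$ rather than a cone.
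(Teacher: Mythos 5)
You cannot be graded against the paper here in the usual sense: the paper imports this theorem verbatim from \citep{nie2020hermitian} and gives no proof of it, so I can only assess your argument on its own merits --- and it is correct. Necessity is indeed a one-line substitution, and your observation that the two independent swaps $i_1\leftrightarrow j_1$ and $i_2\leftrightarrow j_2$ generate the Hermitian swap $(13)(24)$ correctly explains why \eqref{equ:real_hdecomp} subsumes the real-Hermitian condition rather than conflicting with it. For sufficiency, your choice of reshaping is the right one: grouping modes $(1,3)$ against $(2,4)$, rather than the square unfolding $M(\mathcal{A})$ used elsewhere in the paper, is exactly what converts \eqref{equ:real_hdecomp} into separate symmetry of the row and column multi-indices; note that the paper itself tacitly relies on this same regrouping when it applies the theorem in the proof of Theorem \ref{thm:real_cpsdecomp} via the permuted tensor $\hat{\mathcal{A}}_{ijkl}=\mathcal{A}_{ikjl}$.

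The one step you state too casually is the identification of the doubly partially symmetric tensors with $\mathcal{S}_{\mathbb{R}}^n\otimes\mathcal{S}_{\mathbb{R}}^n$. This is not ``by the very definition of the tensor product'': what you are actually using is the (easy, but worth a line) fact that
\begin{equation*}
\bigl(\mathcal{S}_{\mathbb{R}}^n\otimes\mathbb{R}^{n\times n}\bigr)\cap\bigl(\mathbb{R}^{n\times n}\otimes\mathcal{S}_{\mathbb{R}}^n\bigr)=\mathcal{S}_{\mathbb{R}}^n\otimes\mathcal{S}_{\mathbb{R}}^n.
\end{equation*}
A clean way to close it: let $P$ be the symmetrizer $X\mapsto\frac{1}{2}(X+X^T)$; your two symmetry conditions say $(P\otimes I)T=T=(I\otimes P)T$, hence $T=(P\otimes P)T$ lies in the range of $P\otimes P$, which is $\mathcal{S}_{\mathbb{R}}^n\otimes\mathcal{S}_{\mathbb{R}}^n$, and the finite expansion $T=\sum_k A_k\otimes B_k$ with symmetric factors follows by expanding in a product basis of that subspace. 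From there the indefinite spectral theorem and the reshaping of $(aa^T)\otimes(bb^T)$ back to $a\otimes b\otimes a\otimes b$ are exactly as you say, and your closing remark --- that admitting $\lambda_i$ of both signs is precisely what makes this a characterization of a linear subspace rather than of a cone --- is on point.
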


Based on it, we obtain the really partially symmetric decomposition for real CPS tensors.
\begin{theorem}\label{thm:real_cpsdecomp}
A real CPS tensor $\mathcal{A}\in \mathbb{R}^{n^4}_{cps}$ has the following really partially symmetric decomposition
\[\mathcal{A}=\sum\limits_{i=1}^r\lambda_ia_i^{\otimes 2} \otimes b_i^{\otimes 2},\]
where $\lambda_i\in\mathbb{R}$ and $a_i,~b_i\in\mathbb{R}^n$, $i=1,2,\cdots,r$.
\end{theorem}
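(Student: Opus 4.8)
The plan is to reduce the statement to the characterization of $\mathbb{R}$-Hermitian decomposability in Theorem \ref{thm:real_decomp} by a single index reshaping. The obstruction to applying that theorem directly is that the $\mathbb{R}$-Hermitian form $u^1\otimes u^2\otimes u^1\otimes u^2$ pairs position $1$ with position $3$ and position $2$ with position $4$, whereas the target form $a^{\otimes 2}\otimes b^{\otimes 2}$ pairs position $1$ with position $2$ and position $3$ with position $4$. These two pairings are interchanged by transposing the two middle factors, so I introduce the auxiliary tensor $\mathcal{B}$ defined by $\mathcal{B}_{ijkl}:=\mathcal{A}_{ikjl}$ and work with $\mathcal{B}$ throughout. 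Note that this middle-index swap is an involution, so $\mathcal{A}_{abcd}=\mathcal{B}_{acbd}$ as well.

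First I would check that $\mathcal{B}\in\mathbb{R}_h^{n^4}$, which is the standing hypothesis of Theorem \ref{thm:real_decomp}. Reality is inherited from $\mathcal{A}$, and the Hermitian relation $\mathcal{B}_{i_1i_2j_1j_2}=\mathcal{B}_{j_1j_2i_1i_2}$ unwinds to $\mathcal{A}_{i_1j_1i_2j_2}=\mathcal{A}_{j_1i_1j_2i_2}$, i.e.\ invariance of $\mathcal{A}$ under the simultaneous swap $(12)(34)$, which is immediate from the partial symmetry $\mathcal{A}_{ijkl}=\mathcal{A}_{\pi(ij)\pi(kl)}$ in the definition of a CPS tensor.

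The key step is to verify that $\mathcal{B}$ satisfies condition \eqref{equ:real_hdecomp}. Writing out $\mathcal{B}_{i_1i_2j_1j_2}=\mathcal{A}_{i_1j_1i_2j_2}$ and $\mathcal{B}_{k_1k_2l_1l_2}=\mathcal{A}_{k_1l_1k_2l_2}$, the hypothesis $\{i_s,j_s\}=\{k_s,l_s\}$ for $s=1,2$ translates into the statement that the unordered pair at positions $(1,2)$ of $\mathcal{A}$ and the unordered pair at positions $(3,4)$ of $\mathcal{A}$ are each preserved. That the corresponding entries of $\mathcal{A}$ then agree is exactly the partial-symmetry relation defining a CPS tensor, so \eqref{equ:real_hdecomp} holds for $\mathcal{B}$. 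I expect this translation of the index-set conditions to be the only delicate point; everything hinges on correctly tracking which positions the middle-index swap identifies, and it is here that the CPS hypothesis (rather than mere Hermitian symmetry) is used.

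With these two facts in hand, Theorem \ref{thm:real_decomp} yields an $\mathbb{R}$-Hermitian decomposition $\mathcal{B}=\sum_{i=1}^r\lambda_i\,u_i^1\otimes u_i^2\otimes u_i^1\otimes u_i^2$ with $\lambda_i\in\mathbb{R}$ and $u_i^1,u_i^2\in\mathbb{R}^n$. Finally I would swap the two middle factors back: since $\mathcal{A}_{abcd}=\mathcal{B}_{acbd}$, applying this involution to the decomposition gives $\mathcal{A}=\sum_{i=1}^r\lambda_i\,u_i^1\otimes u_i^1\otimes u_i^2\otimes u_i^2=\sum_{i=1}^r\lambda_i\,(u_i^1)^{\otimes 2}\otimes(u_i^2)^{\otimes 2}$. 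Setting $a_i=u_i^1$ and $b_i=u_i^2$ produces the claimed really partially symmetric decomposition with real $\lambda_i$ and real $a_i,b_i$, completing the argument.
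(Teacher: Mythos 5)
Your proposal is correct and takes essentially the same approach as the paper: the paper's proof also introduces the middle-index-swapped tensor $\hat{\mathcal{A}}_{ijkl}=\mathcal{A}_{ikjl}$, checks via the CPS partial symmetry that it is a real Hermitian tensor satisfying condition \eqref{equ:real_hdecomp}, applies Theorem \ref{thm:real_decomp}, and then swaps the middle factors back to obtain $\mathcal{A}=\sum_i\lambda_i a_i^{\otimes 2}\otimes b_i^{\otimes 2}$. Your verification of \eqref{equ:real_hdecomp} is if anything more explicit than the paper's, but the argument is the same.
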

\begin{proof}
Let $\mathcal{\hat{A}}$ be a real fourth-order tensor, given by $\mathcal{\hat{A}}_{ijkl}=\mathcal{A}_{ikjl},~1\le i,j,k,l\le n$. Then we have
\[\mathcal{\hat{A}}_{klij}=\mathcal{A}_{kilj}=\mathcal{A}_{ikjl}=\mathcal{\hat{A}}_{ijkl}.\]
So $\mathcal{\hat{A}}$ is a real Hermitian tensor. It also holds that
\[\mathcal{\hat{A}}_{kjil}=\mathcal{A}_{kijl}=\mathcal{A}_{ikjl}=\mathcal{\hat{A}}_{ijkl}.\]
Similarly, we have $\mathcal{\hat{A}}_{ilkj}=\mathcal{\hat{A}}_{ijkl}$, then $\mathcal{\hat{A}}$ satisfies the condition \eqref{equ:real_hdecomp}. According to Theorem \ref{thm:real_decomp}, there exists the decomposition $\mathcal{\hat{A}}=\sum_{i=1}^r\lambda_i a_i\otimes b_i\otimes a_i\otimes b_i$. Thus we obtain a really partially symmetric decomposition for $\mathcal{A}$,
\[\mathcal{A}=\sum_{i=1}^r\lambda_i a_i^{\otimes 2}\otimes b_i^{\otimes 2},\]
where $\lambda_i\in\mathbb{R}$, $a_i,~b_i\in\mathbb{R}^n$, for $i=1,2,\cdots,r$.
\end{proof}

In the next section, we study the orthogonal matrix decomposition of the CPS tensor. It then induce a decomposition for the real CPS tensor in the form of the sum of low-rank real CPS tensors.

\section{Orthogonal matrix decompositions}\label{sec:matrix}
For fourth-order tensor, the square unfolding form $M(\mathcal{A})$ has particular research interest. It is defined as
\begin{equation*}
M(\mathcal{A})_{(j-1)n+i,(l-1)n+k}=\mathcal{A}_{ijkl}.
\end{equation*}
\cite{jiang2018low,jiang2015tensor} and \cite{de2007fourth} introduced the matrix decomposition (or M-decomposition) based on it.  \cite{nie2020hermitian} emphasized the orthogonality of such kind of decomposition for Hermitian tensors. We summarize and rewrite them for fourth-order CPS tensors in the following theorem and give a more concise proof for it.
\begin{theorem}[M-decomposition]\label{thm:matrix_decomp}
If $\mathcal{A}\in \mathbb{C}^{n^4}$ is a CPS tensor, then it has an orthogonal matrix decomposition
\begin{equation}\label{equ:matrix_decomp}
\mathcal{A}=\sum\limits_{i=1}^r\lambda_iE_i\otimes \bar{E_i},
\end{equation}
where $\lambda_i\in \mathbb{R}$, $E_i\in \mathcal{S}_{\mathbb{C}}^{n}$ are complex symmetric matrices. $\left<E_i,E_j\right>=0$ for $i\neq j$ and $\left<E_i,E_i\right>=1$. The decomposition is unique when $\lambda_i$ are distinct from each other. If $\mathcal{A}$ is a real CPS tensor, then $E_i$ of \eqref{equ:matrix_decomp} are real symmetric matrices.
\end{theorem}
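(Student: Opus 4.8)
The plan is to reduce the whole statement to the Hermitian spectral theorem applied to the square unfolding $M(\mathcal{A})$. First I would translate the defining relations of a CPS tensor into matrix statements about $M(\mathcal{A})$. The row index $(j-1)n+i$ encodes the pair $(i,j)$ and the column index $(l-1)n+k$ encodes $(k,l)$, so $M(\mathcal{A})_{(j-1)n+i,(l-1)n+k}=\mathcal{A}_{ijkl}$ and the transposed entry is $\mathcal{A}_{klij}$. The Hermitian property $\mathcal{A}_{ijkl}=\bar{\mathcal{A}}_{klij}$ then gives $\mathcal{A}_{klij}=\overline{M(\mathcal{A})_{(j-1)n+i,(l-1)n+k}}$, i.e. $M(\mathcal{A})^H=M(\mathcal{A})$, so $M(\mathcal{A})$ is a Hermitian matrix.

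Next I would record the effect of the partial symmetry. Let $P$ be the permutation matrix on $\mathbb{C}^{n^2}$ that interchanges the indices $(j-1)n+i$ and $(i-1)n+j$, i.e. the reshuffling corresponding to transposing an $n\times n$ matrix under the reshaping $v\mapsto E$ with $(E)_{ij}=v_{(j-1)n+i}$. The conditions $\mathcal{A}_{ijkl}=\mathcal{A}_{jikl}$ and $\mathcal{A}_{ijkl}=\mathcal{A}_{ijlk}$ say exactly that $PM(\mathcal{A})=M(\mathcal{A})=M(\mathcal{A})P$. Consequently the range of $M(\mathcal{A})$ is contained in the fixed subspace $\{v:Pv=v\}$, which under the reshaping is precisely the space of symmetric matrices, since $Pv=v$ reads $(E)_{ij}=(E)_{ji}$.

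With these two observations the decomposition is immediate. By the spectral theorem for Hermitian matrices, write $M(\mathcal{A})=\sum_{i=1}^r\lambda_iv_iv_i^H$ with $\lambda_i\in\mathbb{R}$ nonzero and $v_i^Hv_j=\delta_{ij}$; since the $v_i$ (having nonzero eigenvalue) lie in the range of $M(\mathcal{A})$, they are fixed by $P$, so the reshaped matrices $E_i$ defined by $(E_i)_{ab}=(v_i)_{(b-1)n+a}$ are symmetric. A direct index check shows $M(\mathcal{A})=\sum_i\lambda_i\,\mathrm{vec}(E_i)\,\mathrm{vec}(E_i)^H$ is the unfolding of $\sum_i\lambda_iE_i\otimes\bar{E_i}$, whence $\mathcal{A}=\sum_i\lambda_iE_i\otimes\bar{E_i}$, and the orthonormality $v_i^Hv_j=\delta_{ij}$ is exactly $\langle E_j,E_i\rangle=\delta_{ij}$. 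For the real case, a real CPS tensor makes $M(\mathcal{A})$ a real symmetric matrix, so the real spectral theorem lets one choose the $v_i$, hence the $E_i$, real. For uniqueness when the $\lambda_i$ are distinct, I would invoke the uniqueness of the spectral projections $v_iv_i^H$ of a Hermitian matrix with distinct eigenvalues: each $v_i$ is determined up to a unimodular phase, and $E_i\otimes\bar{E_i}$ is invariant under $E_i\mapsto e^{\mathrm{i}\theta}E_i$, so each summand $\lambda_iE_i\otimes\bar{E_i}$ is pinned down.

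The only step that takes genuine care, and the one I expect to be the crux, is the passage from the partial-symmetry relations to the identities $PM(\mathcal{A})=M(\mathcal{A})=M(\mathcal{A})P$ and the resulting confinement of the range of $M(\mathcal{A})$ to the symmetric-matrix subspace; this is what forces the spectral eigenvectors to reshape into symmetric matrices $E_i$. Everything else is bookkeeping on the unfolding index map together with the standard spectral theorem, which is presumably why the authors advertise a more concise proof than the references they cite.
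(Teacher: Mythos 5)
Your proposal is correct and follows essentially the same route as the paper: show that the square unfolding $M(\mathcal{A})$ is Hermitian, apply the spectral theorem, and use the partial symmetry $\mathcal{A}_{ijkl}=\mathcal{A}_{jikl}$ to force the folded eigenmatrices $E_i$ to be symmetric, with the real case and uniqueness inherited from the matrix spectral decomposition. Your repackaging of the symmetry step via the commutation matrix $P$ with $PM(\mathcal{A})=M(\mathcal{A})=M(\mathcal{A})P$ is just an invariant restatement of the paper's index computation on the eigenvector equation, and is in fact slightly more careful, since by restricting to eigenvectors in the range (nonzero $\lambda_i$) you make explicit the division by $\lambda_\tau$ that the paper's argument leaves implicit.
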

\begin{proof}
Since $\mathcal{A}$ is CPS, $M(\mathcal{A})$ is Hermitian, and can be decomposed as
\[
M(\mathcal{A}) = \sum_{k = 1}^r \lambda_{k} e_{k} e_{k}^*,\]
where $\lambda_{k}\in \mathbb{R}$ and $e_{k}\in\mathbb{C}^{n^2}$ are mutually orthogonal. Folding $e_{k}$ into matrix $E_{k}$ via $(E_{k})_{ij} = (e_{k})_{(j-1)\times n + i}$, then $E_{k}$ are  mutually orthogonal, that is $\langle E_k, E_l \rangle = 0$ for $k\neq l$ and $\langle E_k, E_k \rangle = 1$, for $k,~l=1,2,\cdots, r$. Thus, we have $\mathcal{A} = \sum\limits_{i = k}^r \lambda_k E_k\otimes \bar{E}_k$.

We also have $M(\mathcal{A})e_\tau = \lambda_\tau e_\tau$, for $1\le\tau\le r$, that is, $\sum_{k,l=1}^na_{ijkl}(e_\tau)_{(l-1)\times n + k} = \lambda_\tau (e_\tau)_{(j-1)\times n + i}$, for any $1\le i, j \le n$. Since $a_{ijkl} = a_{jikl}$, then $(e_\tau)_{(j-1)\times n + i} =(e_\tau)_{(i-1)\times n + j} $, thus $E_\tau$ is symmetric. The uniqueness of the decomposition follows the property of spectral decomposition of Hermitian matrix naturally.
\end{proof}
\subsection{Link between M-decomposition and CPS decomposition}
In this subsection, based on the orthogonal matrix decomposition, we reobtain the CPS decomposability of CPS tensors by a construct procedure in a straightforward and easy-to-understand way. Firstly, we induce a low-rank decomposition for CPS tensors.
\begin{lemma}[SSVD of complex matrix]\citep[Corollary 2.6.6,Corollary 4.4.4]{horn2012matrix}\label{lem:complexsym}
Let $A\in\mathcal{S}_{\mathbb{C}}^n$, it has a symmetric singular value decomposition (SSVD) $A=U\Sigma U^T$, where $U$ is a unitary matrix and $\Sigma$ is a nonnegative diagonal matrix whose diagonal entries are the singular values of $A$.
\end{lemma}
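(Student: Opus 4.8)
The statement is the classical Autonne--Takagi factorization of a complex symmetric matrix, and although it is quoted here from \citep{horn2012matrix}, a self-contained argument can be organized as an induction on the dimension $n$ together with a single deflation step. The plan is to first extract one ``conjugate singular pair'' of $A$, use it to split off a one-dimensional invariant piece, and then apply the induction hypothesis to the orthogonal complement. The key preliminary observation is that, since $A=A^T$, the conjugate transpose satisfies $A^*=\bar A$, so that $A^*A=\bar A A$ is Hermitian positive semidefinite; its eigenvalues are exactly the squares $\sigma_1^2\ge\cdots\ge\sigma_n^2\ge0$ of the singular values of $A$, which will eventually populate $\Sigma$.

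The crucial and least routine step is to promote a top eigenvector of $\bar A A$ into a unit vector $u$ satisfying $Au=\sigma_1\bar u$, which is the conjugate analogue of an eigenpair adapted to the target form $U\Sigma U^T$. Concretely, I would take a unit vector $x$ with $\bar A A x=\sigma_1^2 x$ and set $z=Ax+\sigma_1\bar x$. Using $A\bar A\,\bar x=\overline{\bar A A x}=\sigma_1^2\bar x$, one checks directly that $A\bar z=\sigma_1 z$, so when $z\neq0$ the normalized vector $u=\bar z/\|z\|$ obeys $Au=\sigma_1\bar u$. The degenerate case $z=0$ forces $Ax=-\sigma_1\bar x$, and then $u=ix$ already satisfies $Au=\sigma_1\bar u$; this phase bookkeeping is precisely where one must be careful.

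With such a $u$ in hand, I would extend it to a unitary $Q=[\,u\mid U_2\,]$ with orthonormal columns and compute $Q^TAQ$. The relation $Au=\sigma_1\bar u$ together with the symmetry $A=A^T$ annihilates both off-diagonal blocks: the $(2,1)$ block is $U_2^TAu=\sigma_1U_2^T\bar u=0$ because $U_2^T\bar u=(u^*U_2)^T=0$, and the $(1,2)$ block equals $(Au)^TU_2=\sigma_1 u^*U_2=0$ by the same orthogonality, while the $(1,1)$ entry is $u^TAu=\sigma_1 u^T\bar u=\sigma_1$. Hence $Q^TAQ=\mathrm{diag}(\sigma_1,A')$ with $A'=U_2^TAU_2$ again symmetric of size $n-1$. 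Applying the induction hypothesis to $A'$ and reassembling (using that the product of the $Q$'s stays unitary) yields $A=U\Sigma U^T$ with $\Sigma$ nonnegative diagonal; a final computation $A^*A=\bar U\Sigma^2(\bar U)^{-1}$ confirms that the diagonal of $\Sigma$ consists precisely of the singular values of $A$. The main obstacle throughout is the deflation itself: unlike the ordinary SVD one cannot use two independent unitaries, so everything hinges on producing the single conjugate pair $Au=\sigma_1\bar u$ and on verifying that the very same $Q$ applied on both sides, as $Q^TAQ$ rather than $Q^*AQ$, block-diagonalizes $A$.
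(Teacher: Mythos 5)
The paper offers no proof of this lemma at all: it is quoted verbatim from the cited reference \citep{horn2012matrix} (the Autonne--Takagi factorization), so there is no in-paper argument to compare against. Your blind proof is correct, and it is essentially the classical deflation proof found in that reference. I checked the two delicate points and both are sound: (i) the construction of the conjugate pair, since $\bar z = \bar A\bar x + \sigma_1 x$ gives $A\bar z = A\bar A\bar x + \sigma_1 Ax = \sigma_1^2\bar x + \sigma_1 Ax = \sigma_1 z$, and in the degenerate case $z=0$ the phase fix $u=ix$ indeed satisfies $Au = iAx = -i\sigma_1\bar x = \sigma_1\bar u$; and (ii) the one-sided deflation, where the symmetry $A=A^T$ is exactly what lets the single congruence $Q^TAQ$ (rather than a similarity $Q^*AQ$) kill both off-diagonal blocks via $U_2^T\bar u = \overline{U_2^*u} = 0$. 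Two harmless glosses worth noting: the reassembly actually reads $A=\bar Q\,\mathrm{diag}(\sigma_1,A')\,\bar Q^T$, so the unitary produced by the induction is $U=\bar Q\,\mathrm{diag}(1,U')$ (the conjugation on $Q$ is silently absorbed in your ``product of the $Q$'s'' remark, but it is where the phases live); and the base case $n=1$, i.e.\ $a = e^{i\theta}\lvert a\rvert$ with $U=(e^{i\theta/2})$, should be stated for a complete induction. Your closing check $A^*A=\bar U\Sigma^2\bar U^{-1}$ correctly identifies the diagonal of $\Sigma$ with the singular values. What your self-contained argument buys, relative to the paper's bare citation, is an explicit demonstration of why a \emph{single} unitary suffices for complex symmetric matrices, which is precisely the structural fact the paper exploits later (Corollary \ref{thm:lowcpsdecomp}) to turn the orthogonal matrix decomposition into an extended ``rank-one'' CPS decomposition.
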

\begin{corollary}\label{thm:lowcpsdecomp}
Let $\mathcal{A}\in \mathbb{C}_{cps}^{n^4}$, it can be decomposed as the sum of simple low rank CPS tensors,
\begin{equation}\label{equ:lowcpsdecomp}
\mathcal{A}=\sum\limits_{i=1}^R\alpha_i(p_i\otimes p_i\otimes \bar{q}_i\otimes \bar{q}_i+q_i\otimes q_i\otimes \bar{p}_i\otimes \bar{p}_i),
\end{equation}
where $\alpha_i\in\mathbb{R}$, $p_i,~q_i\in\mathbb{C}^n$, for $i=1,2,\cdots,R$. If $\mathcal{A}\in \mathbb{R}_{cps}^{n^4}$, then $p_i,~q_i\in\mathbb{R}^n$ for $i=1,2,\cdots,R$, and $\mathcal{A}$ is represented as the sum of low-rank real CPS tensors. We call \eqref{equ:lowcpsdecomp} the extended "rank-one" decomposition of the CPS tensor.
\end{corollary}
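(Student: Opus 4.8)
The plan is to refine the M-decomposition of Theorem \ref{thm:matrix_decomp} by applying the symmetric singular value decomposition of Lemma \ref{lem:complexsym} to each matrix factor. By Theorem \ref{thm:matrix_decomp}, I would first write $\mathcal{A}=\sum_{i=1}^r\lambda_i E_i\otimes\bar{E}_i$ with $\lambda_i\in\mathbb{R}$ and $E_i\in\mathcal{S}_{\mathbb{C}}^n$. Lemma \ref{lem:complexsym} then gives $E_i=U_i\Sigma_i U_i^T=\sum_{j=1}^n\sigma_{ij}\,u_{ij}u_{ij}^T$, where $\sigma_{ij}\ge0$ are the singular values and $u_{ij}$ are the orthonormal columns of $U_i$. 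Identifying each symmetric rank-one matrix $u_{ij}u_{ij}^T$ with the second-order tensor $u_{ij}\otimes u_{ij}$, we have $E_i=\sum_j\sigma_{ij}\,u_{ij}\otimes u_{ij}$, and since the $\sigma_{ij}$ are real, $\bar{E}_i=\sum_k\sigma_{ik}\,\bar{u}_{ik}\otimes\bar{u}_{ik}$.

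Next I would expand the matrix tensor product bilinearly,
\begin{equation*}
E_i\otimes\bar{E}_i=\sum_{j,k=1}^n\sigma_{ij}\sigma_{ik}\;u_{ij}\otimes u_{ij}\otimes\bar{u}_{ik}\otimes\bar{u}_{ik},
\end{equation*}
so that $\mathcal{A}=\sum_{i}\lambda_i\sum_{j,k}\sigma_{ij}\sigma_{ik}\,u_{ij}\otimes u_{ij}\otimes\bar{u}_{ik}\otimes\bar{u}_{ik}$, and then split the inner sum according to whether $j=k$. The diagonal part $j=k$ gives $\lambda_i\sigma_{ij}^2\,u_{ij}^{\otimes2}\otimes\bar{u}_{ij}^{\otimes2}$, which matches \eqref{equ:lowcpsdecomp} with $p=q=u_{ij}$ and $\alpha=\tfrac12\lambda_i\sigma_{ij}^2\in\mathbb{R}$. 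For each unordered pair $\{j,k\}$ with $j\neq k$, the two ordered contributions combine to
\begin{equation*}
\lambda_i\sigma_{ij}\sigma_{ik}\bigl(u_{ij}^{\otimes2}\otimes\bar{u}_{ik}^{\otimes2}+u_{ik}^{\otimes2}\otimes\bar{u}_{ij}^{\otimes2}\bigr),
\end{equation*}
which is precisely a summand of \eqref{equ:lowcpsdecomp} with $p=u_{ij}$, $q=u_{ik}$ and $\alpha=\lambda_i\sigma_{ij}\sigma_{ik}\in\mathbb{R}$. Relabeling all these summands as $i=1,\dots,R$ produces the claimed decomposition \eqref{equ:lowcpsdecomp} with real coefficients.

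For the real case $\mathcal{A}\in\mathbb{R}_{cps}^{n^4}$, I would replace Lemma \ref{lem:complexsym} by the real spectral theorem: Theorem \ref{thm:matrix_decomp} now yields real symmetric $E_i$, each orthogonally diagonalizable as $E_i=\sum_j\mu_{ij}\,v_{ij}v_{ij}^T$ with $\mu_{ij}\in\mathbb{R}$ and real orthonormal eigenvectors $v_{ij}$. The identical expansion then applies with $\bar{v}_{ij}=v_{ij}$, so every decomposing vector is real. Note that one should not use the SSVD here, since for a real symmetric matrix with a negative eigenvalue the nonnegative-$\Sigma$ factorization forces a complex column; using signed eigenvalues $\mu_{ij}$ instead keeps the vectors real and merely changes the sign of the (still real) coefficient $\alpha$, which is permitted.

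The computation is essentially index bookkeeping, and I do not expect a deeper obstacle. The one point that genuinely needs care is confirming that the off-diagonal terms of the expanded product pair up exactly into the symmetrized form of \eqref{equ:lowcpsdecomp}, and that the reality of $\lambda_i$ together with $\sigma_{ij}\ge0$ (respectively $\mu_{ij}\in\mathbb{R}$ in the real case) keeps every coefficient $\alpha$ real.
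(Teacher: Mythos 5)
Your proposal is correct and follows essentially the same route as the paper's own proof: apply Theorem \ref{thm:matrix_decomp} to get $\mathcal{A}=\sum_i\lambda_iE_i\otimes\bar{E}_i$, decompose each $E_i$ by the SSVD of Lemma \ref{lem:complexsym} (spectral decomposition in the real case), and expand bilinearly while pairing the $j\neq k$ terms into the symmetrized summands. Your explicit halving of the diagonal coefficient and your remark on why the real case must use signed eigenvalues rather than the SSVD are sound refinements of details the paper passes over silently.
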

\begin{proof}
From Theorem \ref{thm:matrix_decomp}, $\mathcal{A} = \sum\limits_{i = 1}^r \lambda_i E_i\otimes \bar{E}_i$, where $E_i$ are symmetric. According to Lemma \ref{lem:complexsym}, each $E_i$ can be decomposed as $\sum_{j = 1}^{r_i}\beta_i^j u_i^j(u_i^j)^\top$, where $\beta_i^j\in\mathbb{R},~u_i^j\in\mathbb{C}^n$. Thus
\begin{equation}\label{equ:proof}
\begin{aligned}
\mathcal{A} &= &&\sum\limits_{i = 1}^r \lambda_i (\sum_{j = 1}^{r_i} \beta_i^j u_i^j(u_i^j)^\top)\otimes \overline{(\sum_{k = 1}^{r_i} \beta_i^k u_i^k(u_i^k)^\top)}  \\
                  & = &&\sum_{i = 1}^r \lambda_i (\sum_{j = 1}^{r_i}\sum_{k = j}^{r_i} \beta_i^j \beta_i^k(u_i^j \otimes u_i^j \otimes \bar{u}_i^k \otimes \bar{u}_i^k + u_i^k \otimes u_i^k \otimes \bar{u}_i^j \otimes \bar{u}_i^j).
\end{aligned}
\end{equation}
The desired decomposition form then follows. If $\mathcal{A}\in\mathbb{R}_{cps}^{n^4}$, according to Theorem \ref{thm:matrix_decomp}, $E_i$ are real symmetric matrices, which have spectral decomposition as $\sum_{j = 1}^{r_i}\beta_i^j u_i^j(u_i^j)^\top$, where $\beta_i^j\in\mathbb{R},~u_i^j\in\mathbb{R}^n$. The rest proof for the real case is the same.
\end{proof}

Let $\mathcal{A}\in\mathbb{C}_{cps}^{n^4}$, if $\mathcal{A}=\sum_{i=1}^ra_i^{\otimes 2}\otimes\bar{a}_i^{\otimes 2}$ with $r\le n(n+1)/2$ and $\{\Phi(a_ia_i^T,a_ja_j^T)\}_{1\le i<j\le r}$ are linearly independent, where $\Phi(X,Y)=X_{ik}Y_{jl}+Y_{ik}X_{jl}-X_{il}Y_{jk}-Y_{il}X_{jk}$, we can compute the CPS decomposition based on the above orthogonal matrix decomposition as \citep{de2007fourth,de2006link} using simultaneous matrix diagonalization. \cite{fu2018decompositions} developed a constructive algorithm based on \eqref{equ:matrix_decomp} using the representation of random vectors, which is complicated. According to Corollary \ref{thm:lowcpsdecomp}, we have a basic way to obtain an explicit CPS decomposition from the extended "rank-one" CPS decomposition as follows.
\begin{corollary}\label{thm:construct}
Let $\mathcal{A}\in\mathbb{C}_{cps}^{n^4}$, $\mathcal{A}=\sum_{i=1}^k\alpha_i(p_i\otimes p_i\otimes \bar{q}_i\otimes \bar{q}_i+q_i\otimes q_i\otimes \bar{p}_i\otimes \bar{p}_i)$ for some $k$, then it has a CPS decomposition as
\begin{equation*}
\begin{aligned}
\mathcal{A}=&\sum\limits_{i=1}^k\frac{\alpha_i}{4}[(p_i+q_i)^{\otimes 2}\otimes(\bar{p}_i+\bar{q}_i)^{\otimes 2}+(p_i-q_i)^{\otimes 2}\otimes(\bar{p}_i-\bar{q}_i)^{\otimes 2}\\
&-(p_i+iq_i)^{\otimes 2}\otimes(\bar{p}_i-i\bar{q}_i)^{\otimes 2}-(p_i-iq_i)^{\otimes 2}\otimes(\bar{p}_i+i\bar{q}_i)^{\otimes 2}].
\end{aligned}
\end{equation*}
\end{corollary}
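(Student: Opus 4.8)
The plan is to prove the corollary as a single polarization identity, applied summand by summand. By $\mathbb{R}$-linearity of both sides in the index $i$, it suffices to fix one index, write $p,q$ for $p_i,q_i$ and $\alpha$ for $\alpha_i$, and establish the scalar-free identity
\[
p^{\otimes 2}\otimes\bar q^{\otimes 2}+q^{\otimes 2}\otimes\bar p^{\otimes 2}
=\frac14\sum_{\epsilon\in\{1,-1,i,-i\}}\bar\epsilon^{\,2}\,(p+\epsilon q)^{\otimes 2}\otimes\overline{(p+\epsilon q)}^{\otimes 2},
\]
where the weights $\bar\epsilon^{\,2}$ take the values $+1,+1,-1,-1$ for $\epsilon=1,-1,i,-i$, reproducing exactly the four signed terms in the statement (note $\overline{p+iq}=\bar p-i\bar q$, and similarly for the other shifts). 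Multiplying by $\alpha$ and summing over $i=1,\dots,k$ then yields the claimed expression.

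Before verifying the identity I would first observe that its right-hand side is automatically a legitimate CPS decomposition in the sense of Theorem \ref{thm:cps_decomp}: each of the four tensors has the shape $a^{\otimes 2}\otimes\bar a^{\otimes 2}$ (with $a=p\pm q$ or $a=p\pm iq$) and is multiplied by the real scalar $\pm\alpha/4$. Hence no extra work is needed to certify the \emph{form} of the output, and the entire content of the corollary reduces to the tensor identity above.

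To prove that identity I would expand each $(p+\epsilon q)^{\otimes 2}\otimes\overline{(p+\epsilon q)}^{\otimes 2}$ into its sixteen monomial tensors. A monomial carrying $a$ copies of $q$ among its first two slots and $b$ copies of $\bar q$ among its last two slots appears with coefficient $\epsilon^{a}\bar\epsilon^{\,b}$; since $\epsilon$ is a fourth root of unity we have $\bar\epsilon=\epsilon^{-1}$, so this coefficient equals $\epsilon^{a-b}$. Weighting by $\bar\epsilon^{\,2}=\epsilon^{-2}$ and summing, the total coefficient of each monomial becomes $\sum_{\epsilon}\epsilon^{a-b-2}$, which by the discrete orthogonality of the fourth roots of unity equals $4$ when $a-b\equiv 2\pmod 4$ and $0$ otherwise. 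For the monomials that actually occur we have $a,b\in\{0,1,2\}$, so $a-b\equiv 2\pmod 4$ forces $(a,b)=(2,0)$ or $(a,b)=(0,2)$; these are precisely $q\otimes q\otimes\bar p\otimes\bar p$ and $p\otimes p\otimes\bar q\otimes\bar q$, which survive with coefficient $4$, while every cross term cancels. Dividing by $4$ gives the identity.

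The only delicate point is the organization of the cancellation among the sixteen cross terms: carrying it out by brute-force expansion is tedious and error-prone, so I expect the main work to lie in packaging it cleanly through the root-of-unity bookkeeping above, which collapses the whole verification into a one-line orthogonality statement. With that in hand, the corollary follows immediately by linearity over the summation index $i$.
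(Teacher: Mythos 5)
Your proof is correct, and it takes a genuinely different route from the paper's. The paper never expands the tensors directly: it first rewrites the building block $\mathcal{E}=p\otimes p\otimes\bar q\otimes\bar q+q\otimes q\otimes\bar p\otimes\bar p$ as a matrix-level polarization, $\mathcal{E}=\frac{1}{2}[(pp^T+qq^T)\otimes(\bar p\bar p^T+\bar q\bar q^T)-(pp^T-qq^T)\otimes(\bar p\bar p^T-\bar q\bar q^T)]$, then passes to the associated quartic conjugate form and checks the scalar identity
\begin{equation*}
\mathcal{E}(x,x,\bar x,\bar x)=\tfrac{1}{2}\bigl[|(p^Tx)^2+(q^Tx)^2|^2-|(p^Tx)^2-(q^Tx)^2|^2\bigr]
=\tfrac{1}{4}\bigl[|p^Tx+q^Tx|^4+|p^Tx-q^Tx|^4-|p^Tx+iq^Tx|^4-|p^Tx-iq^Tx|^4\bigr],
\end{equation*}
concluding the tensor identity from the equality of forms; this last step implicitly uses the bijection between CPS tensors and quartic real-valued symmetric conjugate forms (cited from Jiang et al.\ elsewhere in the paper), since agreement of the forms for all $x\in\mathbb{C}^n$ must be lifted back to agreement of the CPS tensors themselves. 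Your argument instead proves the tensor identity outright: the weighted sum $\frac14\sum_{\epsilon^4=1}\bar\epsilon^{\,2}(p+\epsilon q)^{\otimes 2}\otimes\overline{(p+\epsilon q)}^{\otimes 2}$ is expanded into monomials, and the discrete orthogonality $\sum_{\epsilon^4=1}\epsilon^{a-b-2}=4$ iff $a-b\equiv 2\pmod 4$ kills all cross terms and leaves exactly $(a,b)=(2,0),(0,2)$, i.e.\ $q^{\otimes2}\otimes\bar p^{\otimes2}$ and $p^{\otimes2}\otimes\bar q^{\otimes2}$ --- a computation I checked and which is sound, including the observation that each output term $\pm\frac{\alpha_i}{4}\,a^{\otimes2}\otimes\bar a^{\otimes2}$ is already in the shape required by Theorem \ref{thm:cps_decomp} (here you quietly use that the $\alpha_i$ are real, which is how they arise in Corollary \ref{thm:lowcpsdecomp} and how the paper intends the hypothesis). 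What each approach buys: the paper's scalar route is notationally lighter, since moduli of complex numbers are easy to manipulate, but it is not self-contained, resting on the injectivity of the tensor-to-form correspondence; your root-of-unity bookkeeping is marginally more work to set up but is entirely elementary and self-contained, and it makes transparent why exactly the fourth roots of unity with weights $\bar\epsilon^{\,2}$ appear, which the paper's verification-style computation leaves unexplained.
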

\begin{proof}
For a CPS tensor $\mathcal{E}=p\otimes p\otimes \bar{q}\otimes\bar{q}+q\otimes q\otimes \bar{p}\otimes\bar{p}$, we have $\mathcal{E}=\frac{1}{2}[(pp^T+qq^T)\otimes(\bar{p}\bar{p}^T+\bar{q}\bar{q}^T)-(pp^T-qq^T)\otimes(\bar{p}\bar{p}^T-\bar{q}\bar{q}^T)].$
Then
\begin{equation*}
\begin{aligned}
\mathcal{E}(x,x,\bar{x},\bar{x})=&\frac{1}{2}[|(p^Tx)^2+(q^Tx)^2|^2-|(p^Tx)^2-(q^Tx)^2|^2]\\
=&\frac{1}{4}[|p^Tx+q^Tx|^4+|p^Tx-q^Tx|^4-|p^Tx+iq^Tx|^4-|p^Tx-iq^Tx|^4].
\end{aligned}
\end{equation*}
Then the desired CPS decomposition follows.
\end{proof}
\begin{corollary}\label{thm:cpsforreal}
Let $\mathcal{A}\in\mathbb{R}_{cps}^{n^4}\backslash\mathbb{R}_s^{n^4}$, it has a CPS decomposition in the following form.
\begin{equation}\label{equ:realcps}
\mathcal{A}=\sum\limits_{i=1}^{R_1}\lambda_i(a_i^{\otimes2}\otimes\bar{a}_i^{\otimes2}+\bar{a}_i^{\otimes2}\otimes a_i^{\otimes2})+\sum\limits_{j=1}^{R_2}\lambda_j b_j^{\otimes 4},
\end{equation}
where $\lambda_i,\lambda_j\in\mathbb{R}$, $a_i\in\mathbb{C}^n\backslash\mathbb{R}^n$, $b_j\in\mathbb{R}^n$.
\end{corollary}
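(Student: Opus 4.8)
The plan is to build the decomposition directly out of the two preceding corollaries, specialized to the real case and then reorganized. First I would invoke Corollary~\ref{thm:lowcpsdecomp}: since $\mathcal{A}\in\mathbb{R}_{cps}^{n^4}$, the real part of that result gives an extended ``rank-one'' decomposition
\[
\mathcal{A}=\sum_{i=1}^{R}\alpha_i\bigl(p_i^{\otimes 2}\otimes q_i^{\otimes 2}+q_i^{\otimes 2}\otimes p_i^{\otimes 2}\bigr),\qquad \alpha_i\in\mathbb{R},\ p_i,q_i\in\mathbb{R}^n,
\]
in which the factor vectors are now \emph{real} (so $\bar p_i=p_i$, $\bar q_i=q_i$). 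Discarding any summand with $p_i=0$ or $q_i=0$ — each of which contributes the zero tensor — I may assume $p_i\neq 0$ and $q_i\neq 0$ for every retained index $i$.

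Next I would feed each surviving block into the identity of Corollary~\ref{thm:construct} with $p=p_i$, $q=q_i$. The key point is that realness makes the four terms produced there collapse: the first two become the genuine fourth powers $(p_i+q_i)^{\otimes 4}$ and $(p_i-q_i)^{\otimes 4}$, while, upon setting $a_i:=p_i+\sqrt{-1}\,q_i$ (so $\bar a_i=p_i-\sqrt{-1}\,q_i$), the last two become $a_i^{\otimes 2}\otimes\bar a_i^{\otimes 2}$ and $\bar a_i^{\otimes 2}\otimes a_i^{\otimes 2}$. Summing over $i$ then yields
\[
\mathcal{A}=\sum_{i=1}^{R}\Bigl(-\tfrac{\alpha_i}{4}\Bigr)\bigl(a_i^{\otimes 2}\otimes\bar a_i^{\otimes 2}+\bar a_i^{\otimes 2}\otimes a_i^{\otimes 2}\bigr)+\sum_{i=1}^{R}\tfrac{\alpha_i}{4}\bigl[(p_i+q_i)^{\otimes 4}+(p_i-q_i)^{\otimes 4}\bigr],
\]
which is exactly the shape of \eqref{equ:realcps}: the first group supplies the conjugate pairs with $\lambda_i=-\alpha_i/4$, and the second group supplies the real quartic terms, each $b_j^{\otimes 4}$ having $b_j$ equal to some $p_i\pm q_i\in\mathbb{R}^n$ (after relabeling, this is $\sum_{j}\lambda_j b_j^{\otimes 4}$).

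The only delicate point — and the step I expect to be the main, if mild, obstacle — is confirming the membership condition $a_i\in\mathbb{C}^n\setminus\mathbb{R}^n$. This is where the reduction to $q_i\neq 0$ is used: with $q_i\neq 0$ the vector $a_i=p_i+\sqrt{-1}\,q_i$ has nonzero imaginary part and hence is not real, as the statement demands. I would also note that \eqref{equ:realcps} is a bona fide CPS decomposition in the sense of Theorem~\ref{thm:cps_decomp}, since $\bar a_i^{\otimes 2}\otimes a_i^{\otimes 2}$ is the CPS rank-one tensor built from $\bar a_i$ and each $b_j^{\otimes 4}=b_j^{\otimes 2}\otimes\bar b_j^{\otimes 2}$ is a CPS rank-one term. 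Finally, because $\mathcal{A}\notin\mathbb{R}_s^{n^4}$ whereas any sum of $b_j^{\otimes 4}$ terms is symmetric, at least one conjugate pair must persist, so $R_1\ge 1$.
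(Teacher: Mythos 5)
Your proposal is correct and takes essentially the same route as the paper, whose entire proof is the one-line citation of Theorem~\ref{thm:matrix_decomp} and Corollary~\ref{thm:construct}: you have simply written out that chain explicitly, using the real case of Corollary~\ref{thm:lowcpsdecomp} and then observing that realness of $p_i,q_i$ collapses the four terms of Corollary~\ref{thm:construct} into $(p_i\pm q_i)^{\otimes 4}$ and the conjugate pair built from $a_i=p_i+\sqrt{-1}\,q_i$. Your extra care in discarding degenerate blocks so that $q_i\neq 0$ guarantees $a_i\in\mathbb{C}^n\backslash\mathbb{R}^n$ (and the remark that $\mathcal{A}\notin\mathbb{R}_s^{n^4}$ forces $R_1\ge 1$) fills in details the paper leaves implicit, and is sound.
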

\begin{proof}
The desired result follows from Theorem \ref{thm:matrix_decomp} and Corollary \ref{thm:construct}.
\end{proof}

Since $a\otimes a\otimes\bar{a}\otimes\bar{a}+\bar{a}\otimes\bar{a}\otimes a\otimes a$ for $a\in\mathbb{C}$ is a real CPS tensor, we pose the following conjecture.
\begin{conjecture}\label{conj:realcps}
Let $\mathcal{A}\in\mathbb{R}_{cps}^{n^4}\backslash\mathbb{R}_s^{n^4}$, it has a CPS rank decomposition in the form of \eqref{equ:realcps}.
\end{conjecture}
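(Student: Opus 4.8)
The plan is to recast the conjecture as the existence of a \emph{conjugation-invariant} CPS rank decomposition. The first observation is that a decomposition of the shape \eqref{equ:realcps} is precisely one whose multiset of rank-one CPS terms is invariant under the conjugation $a\mapsto\bar a$. Indeed, a term $\lambda a^{\otimes2}\otimes\bar a^{\otimes2}$ that is fixed by this conjugation satisfies $a^{\otimes2}\otimes\bar a^{\otimes2}=\bar a^{\otimes2}\otimes a^{\otimes2}$, which forces $a$ and $\bar a$ to be parallel, hence $a=e^{i\gamma}w$ with $w\in\mathbb{R}^n$, and the term collapses to $\lambda w^{\otimes4}$, a $b_j^{\otimes4}$ summand; a term carried to a distinct one pairs with its conjugate to give a summand $\lambda(a^{\otimes2}\otimes\bar a^{\otimes2}+\bar a^{\otimes2}\otimes a^{\otimes2})$ with $a\in\mathbb{C}^n\backslash\mathbb{R}^n$ after absorbing a phase. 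Since each $\lambda_i$ is real, rescaling $a_i\mapsto c_ia_i$ only multiplies a term by $|c_i|^4>0$ and leaves the rank-one tensor itself unchanged; consequently two equivalent CPS rank decompositions carry the \emph{same} multiset of rank-one tensors, so the relevant datum is exactly that multiset.

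Next I would invoke that $\mathcal{A}$ is real. Given any CPS rank decomposition $\mathcal{A}=\sum_{i=1}^r\lambda_ia_i^{\otimes2}\otimes\bar a_i^{\otimes2}$, taking complex conjugates and using $\bar{\mathcal{A}}=\mathcal{A}$ produces a second CPS rank decomposition $\mathcal{A}=\sum_{i=1}^r\lambda_i\bar a_i^{\otimes2}\otimes a_i^{\otimes2}$ with the same number $r=rank_{cps}(\mathcal{A})$ of terms. Thus conjugation defines an involution $\iota$ on the set $\mathcal{R}(\mathcal{A})$ of CPS rank decompositions of $\mathcal{A}$ (equivalently, on the associated multisets of rank-one tensors), and by the first paragraph the conjecture is \emph{equivalent} to the assertion that $\iota$ has a fixed point. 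If the CPS rank decomposition of $\mathcal{A}$ is unique up to permutation—for instance under the Kruskal-type condition $2k_U\ge r+2$ of the earlier Proposition—then the unique decomposition is automatically $\iota$-invariant, and the conjecture follows at once; this settles every case covered by known identifiability conditions.

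The genuine difficulty is the non-unique case. There $\mathcal{R}(\mathcal{A})$ is positive-dimensional, and $\iota$ acts on it as an anti-holomorphic involution, for which an abstract fixed point need not exist (anti-holomorphic involutions can have empty real locus). To force a fixed point one would have to use the geometry of $\mathcal{R}(\mathcal{A})$: one route is to realise it as the real locus of an explicit affine variety cut out by the decomposition equations and invoke a connectedness or fixed-point argument (e.g. Smith theory for the resulting $\mathbb{Z}/2$-action) guaranteeing conjugation-symmetric points; another is a perturbation argument that replaces $\mathcal{A}$ by a nearby real CPS tensor with unique decomposition (where the statement holds) and passes to a limit. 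I expect this last step to be the main obstacle: limits of rank decompositions may drop the number of terms through border-rank phenomena or lose the conjugate pairing, so simultaneously preserving minimality and the conjugation symmetry in the limit is exactly where the argument is delicate, which is presumably why the statement is left as a conjecture.
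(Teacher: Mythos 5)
You should first be clear that the paper contains no proof of this statement: it is posed as Conjecture~\ref{conj:realcps} and explicitly listed as open in the concluding section. The only supporting evidence supplied is Corollary~\ref{thm:cpsforreal}, which produces a decomposition of the shape \eqref{equ:realcps} with \emph{no} claim of minimality, together with the observation that $a^{\otimes2}\otimes\bar a^{\otimes2}+\bar a^{\otimes2}\otimes a^{\otimes2}$ is real CPS. So there is nothing to compare your argument against; the only question is whether it closes the conjecture, and it does not, as you yourself concede. Your reduction is sound: taking the datum of a CPS rank decomposition to be the multiset of weighted rank-one tensors $\lambda_i\,a_i^{\otimes2}\otimes\bar a_i^{\otimes2}$ (coefficients attached, so conjugate terms automatically inherit the same real $\lambda_i$), conjugation is an involution $\iota$ on the set of rank decompositions because $\bar{\mathcal A}=\mathcal A$; a term fixed by $\iota$ forces $\bar a^{\otimes2}=c\,a^{\otimes2}$, hence $a=e^{i\gamma}w$ with $w\in\mathbb{R}^n$ and the term collapses to $\lambda w^{\otimes4}$; non-fixed terms pair off into $\lambda(a^{\otimes2}\otimes\bar a^{\otimes2}+\bar a^{\otimes2}\otimes a^{\otimes2})$. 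This correctly identifies the conjecture with the existence of an $\iota$-fixed rank decomposition, and under the paper's Kruskal-type condition $2k_U\ge r+2$ the unique decomposition is automatically $\iota$-invariant, so you do obtain a genuine partial result slightly beyond what the paper records.

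The gap is the non-unique case, which is the entire content of the conjecture, and there you have only named strategies rather than executed one. An abstract fixed-point argument is unavailable exactly as you fear: $\iota$ acts anti-holomorphically on the set of rank decompositions, and anti-holomorphic involutions can be fixed-point free (e.g.\ $z\mapsto -1/\bar z$ on the Riemann sphere), while Smith-theoretic conclusions for the $\mathbb{Z}/2$-action would require control of the connectivity or mod-$2$ cohomology of the decomposition variety, about which nothing is established here --- it need not be connected, smooth, or pure-dimensional. The perturbation route fails for the reason you give and one more: the set of CPS tensors of CPS rank at most $r$ is not closed, and a generic nearby real CPS tensor typically has strictly larger CPS rank than a special $\mathcal A$, so there may be no nearby identifiable tensors of the same rank to perturb to; even when there are, decomposing vectors along the limiting family can blow up (border-rank behavior), losing minimality and the conjugate pairing simultaneously. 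So your write-up is an honest and correct reduction plus a resolved special case, not a proof; the conjecture remains exactly as open as the paper leaves it.
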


In conclusion, for a fourth-order CPS tensor $\mathcal{A}$, a procedure to obtain the CPS decomposition involves the following steps:
\begin{enumerate}
\item Compute the spectral decomposition of $M(\mathcal{A})=\sum_{i = 1}^r \lambda_i e_i e_i^*$. Then folding each $e_i$ back into a symmetric matrix to obtain the orthogonal matrix decomposition $\mathcal{A}=\sum\limits_{i=1}^r\lambda_iE_i\otimes \bar{E_i}$.
\item Compute the SSVD (or spectral decomposition if $E_i$ are real symmetric) for each $E_i$, and obtain the extended "rank-one" decomposition $\mathcal{A}=\sum\limits_{i=1}^R\alpha_i(p_i\otimes p_i\otimes \bar{q}_i\otimes \bar{q}_i+q_i\otimes q_i\otimes \bar{p}_i\otimes \bar{p}_i)$.
\item Form the CPS decomposition as Corollary \ref{thm:construct}.
\end{enumerate}

As an example, we compute the CPS decomposition of $\mathcal{E}^{1122}(1)\in\mathbb{R}_{cps}^{2^4}$ by this procedure. And it is of the same form as \eqref{equ:basis_decomp}, which is also a CPS rank decomposition as asserted in Proposition \ref{pro:basis_decomp}. However, the above procedure cannot yield the minimal CPS decomposition in general, since there may be many redundant items. For the unitary CPS decomposable tensor, that is
\begin{equation}\label{equ:unitary}
\mathcal{A}=\sum\limits_{i=1}^r\lambda_ip_i\otimes p_i\otimes\bar{p}_i\otimes\bar{p}_i,\quad p_i^Hp_j=0~(\forall~i\neq j),
\end{equation}
which is surely a CPS rank decomposition, \cite{fu2019successive} proved that the successive partial-symmetric rank-one algorithm can recover the unitary CPS decomposition.
\begin{remark}
We call $\mathcal{A}\in \mathbb{R}^{n^4}$ skew partial symmetric tensor if
\begin{equation*}
\mathcal{A}_{ijkl}=\mathcal{A}_{jikl}=\mathcal{A}_{ijlk}=-\mathcal{A}_{klij},~1\le i,j,k,l\le n.
\end{equation*}
Then it can be decomposed as
\begin{equation*}
\mathcal{A}=\sum\limits_{i}\lambda_i(U_i\otimes V_i-V_i\otimes U_i),
\end{equation*}
and
\begin{equation*}
\mathcal{A}=\sum\limits_i\lambda_i(p_i\otimes p_i\otimes q_i\otimes q_i-q_i\otimes q_i\otimes p_i\otimes p_i).
\end{equation*}
\begin{proof}
$M(\mathcal{A})$ is skew-symmetric according to the definition of the skew partial-symmetric tensor. Then $M(\mathcal{A})=\sum_{i}\lambda_i(u_iv_i^T-v_iu_i^T)$. The rest of the proof is similar to that for CPS tensors, here we omit it.
\end{proof}
\end{remark}
\subsection{Matrix rank of CPS tensor}
Based on the orthogonal matrix decomposition, we define a new rank for CPS tensors.
\begin{definition}
Let $\mathcal{A}\in \mathbb{C}^{n^4}$ be a CPS tensor, the matrix rank of $\mathcal{A}$, denoted by $rank_m(\mathcal{A})$, is defined as
\[rank_m(\mathcal{A})=min\{r|\mathcal{A}=\sum\limits_{i=1}^r\lambda_iE_i\otimes \bar{E}_i,\lambda_i\in \mathbb{R},E_i\in \mathcal{S}_{\mathbb{C}}^{n}\}.\]
\end{definition}
\begin{corollary}\label{thm:matrix_rank}
Let $\mathcal{A}\in\mathbb{C}^{n^4}_{cps}$, $rank_m(\mathcal{A})\le n(n+1)/2.$
\end{corollary}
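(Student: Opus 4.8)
The plan is to invoke the M-decomposition (Theorem \ref{thm:matrix_decomp}) directly and then count dimensions. First I would observe that by Theorem \ref{thm:matrix_decomp}, any $\mathcal{A} \in \mathbb{C}^{n^4}_{cps}$ admits an orthogonal matrix decomposition $\mathcal{A} = \sum_{i=1}^r \lambda_i E_i \otimes \bar{E}_i$ in which each $E_i$ is a complex symmetric matrix and the family $\{E_i\}_{i=1}^r$ is orthonormal with respect to the matrix inner product, i.e. $\langle E_i, E_j \rangle = \delta_{ij}$. Since this is one admissible decomposition of precisely the type appearing in the definition of $rank_m$, we immediately have $rank_m(\mathcal{A}) \le r$.

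Next I would bound $r$ by a dimension count. The matrices $E_1, \dots, E_r$ all lie in the space $\mathcal{S}_{\mathbb{C}}^n$ of complex symmetric $n \times n$ matrices, and being orthonormal they are in particular linearly independent over $\mathbb{C}$. Hence their number cannot exceed the complex dimension of $\mathcal{S}_{\mathbb{C}}^n$. Recalling that $\dim_{\mathbb{C}} \mathcal{S}_{\mathbb{C}}^n = n(n+1)/2$, since a symmetric matrix is determined freely by its entries on and above the diagonal, I conclude $r \le n(n+1)/2$. Combining with the previous bound yields $rank_m(\mathcal{A}) \le r \le n(n+1)/2$, as desired.

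There is essentially no hard step here: the entire argument rests on the symmetry of the factors $E_i$ guaranteed by Theorem \ref{thm:matrix_decomp}, which confines them to $\mathcal{S}_{\mathbb{C}}^n$ rather than to the full $n^2$-dimensional space of $n \times n$ matrices. The only point worth a moment's care is the passage from \emph{orthonormal} to \emph{linearly independent}, together with the observation that the relevant independence is over $\mathbb{C}$ (and not merely over $\mathbb{R}$, which the real coefficients $\lambda_i$ might at first glance suggest); since orthonormal vectors in a complex inner product space are automatically $\mathbb{C}$-linearly independent, and $\mathcal{S}_{\mathbb{C}}^n$ has complex dimension $n(n+1)/2$, the bound follows without further work.
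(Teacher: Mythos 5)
Your proof is correct and takes essentially the same approach as the paper's: the paper's one-line argument (``it follows from the dimension of the symmetric matrices set'') is precisely your dimension count applied to the orthonormal symmetric factors $E_i$ supplied by Theorem \ref{thm:matrix_decomp}. Your extra remarks---that orthonormality with respect to the Hermitian inner product gives $\mathbb{C}$-linear independence, and that $\dim_{\mathbb{C}}\mathcal{S}_{\mathbb{C}}^n = n(n+1)/2$---simply make explicit the details the paper leaves implicit.
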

\begin{proof}
It follows from the dimension of the symmetric matrices set.
\end{proof}

It is easy to see that $rank_m{\mathcal{A}}=rank(M(\mathcal{A}))$ from the proof of Theorem \ref{thm:matrix_decomp}. $rank_m(\mathcal{A})$ can be seen as a generalization of the strongly symmetric M-rank $rank_{ssm}(\mathcal{A})$ defined by \cite{jiang2018low} for symmetric tensors. Motivated by them, we also define
\[rank_{sm}(\mathcal{A})=min\{r|\mathcal{A}=\sum\limits_{i=1}^r\lambda_i E_i\otimes \bar{E}_i,\lambda_i\in\mathbb{R},E_i\in\mathbb{C}^{n\times n}\}.\]
\begin{theorem}
Let $\mathcal{A}\in \mathbb{C}^{n^4}$ be a CPS tensor, then $rank_m(\mathcal{A})=rank_{sm}(\mathcal{A}).$
\end{theorem}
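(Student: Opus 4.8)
The plan is to prove the equality by showing that both $rank_m(\mathcal{A})$ and $rank_{sm}(\mathcal{A})$ coincide with $rank(M(\mathcal{A}))$, the rank of the square unfolding. One inequality is immediate: every decomposition admissible for $rank_m(\mathcal{A})$ uses complex \emph{symmetric} matrices $E_i\in\mathcal{S}_{\mathbb{C}}^n$, and since $\mathcal{S}_{\mathbb{C}}^n\subseteq\mathbb{C}^{n\times n}$, any such decomposition is also admissible for $rank_{sm}(\mathcal{A})$. Taking the minimum over the strictly larger feasible set can only decrease the value, so $rank_{sm}(\mathcal{A})\le rank_m(\mathcal{A})$.

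For the reverse inequality I would pass to the square unfolding. The key bookkeeping fact is that for any $E\in\mathbb{C}^{n\times n}$, folding it into the vector $e\in\mathbb{C}^{n^2}$ via $e_{(j-1)n+i}=E_{ij}$ gives $M(E\otimes\bar{E})=ee^*$, a rank-one Hermitian matrix; this is read off directly from $M(\mathcal{A})_{(j-1)n+i,(l-1)n+k}=\mathcal{A}_{ijkl}$ together with $(E\otimes\bar{E})_{ijkl}=E_{ij}\bar{E}_{kl}$. Applying the linear map $M$ to an arbitrary $rank_{sm}$-decomposition $\mathcal{A}=\sum_{i=1}^r\lambda_iE_i\otimes\bar{E}_i$ therefore yields $M(\mathcal{A})=\sum_{i=1}^r\lambda_ie_ie_i^*$, a sum of $r$ rank-one matrices. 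By subadditivity of matrix rank, $rank(M(\mathcal{A}))\le r$, and minimizing over all such decompositions gives $rank(M(\mathcal{A}))\le rank_{sm}(\mathcal{A})$.

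It then remains to invoke the identity $rank_m(\mathcal{A})=rank(M(\mathcal{A}))$ recorded just before this theorem, which is itself a consequence of the proof of Theorem \ref{thm:matrix_decomp}: the spectral decomposition of the Hermitian matrix $M(\mathcal{A})$ produces exactly $rank(M(\mathcal{A}))$ eigenvectors, and the partial symmetry $\mathcal{A}_{ijkl}=\mathcal{A}_{jikl}$ forces each eigenvector to fold into a \emph{symmetric} matrix, so a symmetric M-decomposition with $rank(M(\mathcal{A}))$ terms always exists. Chaining the inequalities gives $rank_m(\mathcal{A})=rank(M(\mathcal{A}))\le rank_{sm}(\mathcal{A})\le rank_m(\mathcal{A})$, forcing equality throughout. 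The argument has no serious obstacle beyond getting the unfolding identity $M(E\otimes\bar{E})=ee^*$ right; the real content is the observation that the CPS structure is precisely what makes the eigenvectors of $M(\mathcal{A})$ symmetrically foldable, so that imposing symmetry on the factor matrices costs no additional terms.
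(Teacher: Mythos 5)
Your proof is correct and follows essentially the same route as the paper: the inclusion $\mathcal{S}_{\mathbb{C}}^n\subseteq\mathbb{C}^{n\times n}$ gives $rank_{sm}(\mathcal{A})\le rank_m(\mathcal{A})$, unfolding an arbitrary decomposition gives $rank(M(\mathcal{A}))\le rank_{sm}(\mathcal{A})$, and the identity $rank_m(\mathcal{A})=rank(M(\mathcal{A}))$ from the M-decomposition theorem closes the chain. You merely make explicit the bookkeeping ($M(E\otimes\bar{E})=ee^*$ and the symmetric foldability of eigenvectors) that the paper's terse proof leaves implicit, so there is nothing to add.
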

\begin{proof}
It is obvious that $rank_m(\mathcal{A})\ge rank_{sm}(\mathcal{A}).$ On the other hand, if $rank_{sm}(\mathcal{A})=r$, we have $rank(M(\mathcal{A}))\le r$. Since $rank(M(\mathcal{A}))=rank_m(\mathcal{A})$, we obtain the desired conclusion.
\end{proof}

It is natural to propose the following approximation problem for $\mathcal{A}\in\mathbb{C}_{cps}^{n^4}$, based on previous arguments.
\begin{equation}\label{equ:matrix_approx}
\underset{\substack{\|X\|_F=1,X\in\mathcal{S}_{\mathbb{C}}^n\\ \lambda\in\mathbb{R}}}{\min}\|\mathcal{A}-\lambda X\otimes \bar{X}\|_F.
\end{equation}
This problem can be solved easily by computing the nearest symmetric rank-one problem of $M(\mathcal{A})$, according to the proof of Theorem \ref{thm:matrix_decomp}. Let $A=M(\hat{\mathcal{A}})$, with $\hat{\mathcal{A}}_{ikjl}=\mathcal{A}_{ijkl}$, then $A$ is symmetric, \eqref{equ:matrix_approx} is equivalent to
\begin{equation*}
\underset{\substack{\|X\|_F=1,X\in \mathbb{C}^{n\times n}\\ \lambda\in\mathbb{R}}}{\min}\|A-\lambda X\boxtimes \bar{X}\|_F,
\end{equation*}
which is the so called nearest Kronecker product problem \citep{golub2013matrix}.
\begin{corollary}
Let $\mathcal{A}\in\mathbb{C}_{cps}^{n^4}$ with $\mathcal{A}=\sum_{i=1}^r\lambda_iE_i\otimes \bar{E_i}$, $r=rank_m(\mathcal{A})$, then it has the following properties:
\begin{enumerate}[(1)]
\item If every $E_i$ is rank-one, then $\mathcal{A}$ has a unitarily CPS decomposition as \eqref{equ:unitary}.
\item If $R=\max \{rank(E_i),1\le i\le r\}$, then $r\le rank(\mathcal{A})\le rR^2$.
\item If $r=1$, $\mathcal{A}=\lambda E\otimes\bar{E}$ with $rank(E)=R$, then $rank(\mathcal{A})=R^2$.
\end{enumerate}
\end{corollary}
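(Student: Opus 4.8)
The plan is to prove the three claims in turn, throughout exploiting the identity $rank_m(\mathcal{A})=rank(M(\mathcal{A}))$ together with the symmetric singular value decomposition from Lemma \ref{lem:complexsym}.

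For (1), I would observe that a rank-one symmetric matrix $E_i\in\mathcal{S}_{\mathbb{C}}^n$ has, by SSVD, the form $E_i=\sigma_iu_iu_i^T$ with $u_i$ a unit vector and $\sigma_i\ge0$; the normalization $\langle E_i,E_i\rangle=1$ forces $\sigma_i=1$, so $E_i=u_iu_i^T$ and hence $E_i\otimes\bar E_i=u_i\otimes u_i\otimes\bar u_i\otimes\bar u_i$. Substituting into the M-decomposition gives $\mathcal{A}=\sum_{i=1}^r\lambda_iu_i\otimes u_i\otimes\bar u_i\otimes\bar u_i$. It then remains to translate the matrix orthogonality into vector orthogonality: a direct computation yields $\langle E_i,E_j\rangle=(u_j^Hu_i)^2$, so $\langle E_i,E_j\rangle=0$ for $i\neq j$ forces $u_i^Hu_j=0$, which is exactly the unitary condition in \eqref{equ:unitary}.

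For (2), the lower bound $r\le rank(\mathcal{A})$ follows from the square unfolding: a rank-one tensor $u^1\otimes u^2\otimes u^3\otimes u^4$ unfolds to the rank-one matrix $(u^2\boxtimes u^1)(u^4\boxtimes u^3)^T$, so any CP decomposition of $\mathcal{A}$ into $rank(\mathcal{A})$ terms unfolds to a sum of that many rank-one matrices, whence $r=rank(M(\mathcal{A}))\le rank(\mathcal{A})$. For the upper bound I would apply the SSVD to each $E_i$, writing $E_i=\sum_{p=1}^{r_i}\beta_i^pu_i^p(u_i^p)^T$ with $r_i=rank(E_i)\le R$; expanding $E_i\otimes\bar E_i$ exactly as in \eqref{equ:proof} produces at most $r_i^2\le R^2$ rank-one tensors, so $\mathcal{A}$ is a sum of at most $rR^2$ rank-one tensors, giving $rank(\mathcal{A})\le rR^2$.

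For (3), the upper bound $rank(\mathcal{A})\le R^2$ is the case $r=1$ of (2). The main point, and the step I expect to require the most care, is the matching lower bound $rank(\mathcal{A})\ge R^2$. Here I would use a flattening different from the square unfolding: grouping modes $\{1,3\}$ against $\{2,4\}$ turns $\mathcal{A}_{ijkl}=\lambda E_{ij}\bar E_{kl}$ into the matrix $\lambda(E\boxtimes\bar E)$, whose rank equals $rank(E)\,rank(\bar E)=R^2$ because the rank of a Kronecker product is the product of the ranks. Since the matrix rank of any flattening is a lower bound for the CP rank (each rank-one summand contributes a rank-one matrix to the flattening), we get $rank(\mathcal{A})\ge R^2$, and combining with the upper bound yields $rank(\mathcal{A})=R^2$.
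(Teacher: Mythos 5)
Your proof is correct and takes essentially the same route as the paper: (1) and (2) are read off from the SSVD expansion \eqref{equ:proof}, and your lower bound in (3) uses exactly the flattening that pairs modes $\{1,3\}$ against $\{2,4\}$, which the paper realizes as $M(\tilde{\mathcal{A}})$ after swapping modes $2$ and $3$ and whose rank it computes via the orthonormal system $\{u_i\boxtimes\bar{u}_j\}$, while you identify the same matrix directly as $\lambda(E\boxtimes\bar{E})$ and invoke rank-multiplicativity of the Kronecker product --- an equivalent justification. You also supply details the paper leaves implicit, namely $\langle E_i,E_j\rangle=(u_j^Hu_i)^2$ for the unitarity in (1) and $r=rank(M(\mathcal{A}))\le rank(\mathcal{A})$ for the lower bound in (2).
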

\begin{proof}
$(1)$ and $(2)$ come from \eqref{equ:proof} directly. If $r=1$ and $rank(E)=R$, then $\mathcal{A}=\lambda\sum_{i,j=1}^R\beta_i\beta_ju_i\otimes u_i\otimes \bar{u}_j\otimes \bar{u}_j$ according to \eqref{equ:proof}. We have $rank(\mathcal{A})\le R^2$. On the other hand, $rank(\mathcal{A})=rank(\mathcal{\tilde{\mathcal{A}}})$, where $\tilde{\mathcal{A}}=\lambda\sum_{i,j=1}^R\beta_i\beta_ju_i\otimes \bar{u}_j\otimes u_i\otimes \bar{u}_j$. Since $\{u_i\boxtimes \bar{u}_j\}_{i,j}$ is an orthogonal base, thus $rank(\mathcal{A})=rank(\tilde{\mathcal{A}})\ge rank(M(\tilde{\mathcal{A}}))=R^2$. $rank(\mathcal{A})=R^2$.
\end{proof}

Inspired by \cite{jiang2018low}, the matrix rank of $\mathcal{A}$ can be used to estimate its bound of CP rank. Then we can approximate the low-CP-rank CPS tensor completion problem by the following  problem:
\begin{equation}\label{equ:lowmrank}
\begin{aligned}
\min ~&rank_{m}(\mathcal{X})\\
{\rm s.t.}~&P_{\Omega}(\mathcal{X})=P_{\Omega}(\mathcal{A}),~\mathcal{X}\in\mathbb{C}_{cps}^{n^4},
\end{aligned}
\end{equation}
where $\mathcal{A},~\mathcal{X}\in\mathbb{C}_{cps}^{n^4}$ and $\Omega$ is the index of sample set. Since $\mathcal{A}$ is CPS, we assume that $\Omega$ enjoys the same symmetry.
\eqref{equ:lowmrank} is relaxed as a low rank approximation problem with nuclear norm regular term,
\begin{equation}\label{equ:completion}
\begin{aligned}
\min~ & \mu\|X\|_*+\frac{1}{2}\|P_{\Omega}(\mathcal{X})-P_{\Omega}(\mathcal{A})\|_F^2\\
{\rm s.t.}~& X=M(\mathcal{X}),~\mathcal{X}\in \mathbb{C}_{cps}^{n^4}.\\
\end{aligned}
\end{equation}
\eqref{equ:completion} can be solved by the Fixed Point Continuation (FPC) algorithm \citep{ma2011fixed}.

Through the above arguments, one of the major benefits of the orthogonal matrix decomposition for fourth-order CPS tensors, is that we can deal with it as a Hermitian matrix. For instance, we can use the successive "rank-one" approximation algorithm to compute the low matrix rank approximation as \cite{fu2019successive}.  The eigenpair corresponding to the orthogonal matrix decomposition was also defined as matrices in literatures \citep{cardoso1990eigen,basser2007spectral}. For the fourth-order CPS tensor, the definition is stated explicitly as follows.
\begin{definition}\label{def:matrixeigen}
Let $\mathcal{A}\in\mathbb{C}_{cps}^{n^4}$, if $\mathcal{A}E=\lambda E$, where $\lambda\in\mathbb{R}$, $0\neq E\in\mathcal{S}_{\mathbb{C}}^n$, then $(\lambda,E)$ is called an eigenpair of $\mathcal{A}$, $\lambda$ and $E$ are corresponding matrix eigenvalue and eigenmatrix, respectively.
\end{definition}

\section{Positive semidefinite CPS tensors}\label{sec:psd}
\cite{nie2020hermitian} studied the positive semidefinite Hermitian tensors and its connection with Hermitian eigenvalues. They also introduced the conjugate and Hermitian sum of squares (HSOS), as well as separable Hermitian tensors, which has close relationship with the nonnegativity of the polynomials. For CPS tensors, \cite{jiang2016characterizing} raised the questions about the nonnegativity of real-valued conjugate complex polynomials.
The cones of nonnegative quartic forms in the term of real fourth-order symmetric tensors has already been discussed extensively \citep{jiang2017cones}. We wonder what the hierarchical relationship of different "positive semidefinite" fourth-order CPS tensors will be, which implies their connection to the nonnegativity of quartic real-valued symmetric conjugate forms.
\subsection{Real case}
We first recall and summarize several positive semidefinite definitions for real CPS tensors.
\cite{qi2004eigenvalues} first proposed the positive semidefinitemess (PSD) for even order symmetric tensor. For a fourth-order symmetric tensor $\mathcal{B}$, as Qi's definition, it is PSD if $\mathcal{B}(x,x,x,x)\ge 0,~\forall x\in\mathbb{R}^n.$ Then the definition was adopted to general even order tensor. Here we concentrate on foruth-order CPS tensors.

\begin{definition}
A fourth-order tensor $\mathcal{A}\in\mathbb{R}_{cps}^{n^4}$ is called positive semidefinite (PSD) if
\begin{equation}\label{equ:real_psd}
\mathcal{A}(x,x,x,x)\ge 0,~\forall x\in\mathbb{R}^n.
\end{equation}
The set of all PSD tensors in $\mathbb{R}_{cps}^{n^4}$ is denoted by $\mathbb{R}CPS^{n^4}_+$.
\end{definition}

\cite{jiang2017cones} introduced the matrix PSD for fourth-order real CPS tensors.
\begin{definition}
A fourth-order tensor $\mathcal{A}\in\mathbb{R}_{cps}^{n^4}$ is called matrix PSD if
\begin{equation*}\label{equ:real_matpsd}
\left<X,\mathcal{A}X\right>\ge 0,~\forall X\in\mathcal{S}_{\mathbb{R}}^n.
\end{equation*}
The set of all matrix PSD tensors in $\mathbb{R}_{cps}^{n^4}$ is denoted by $\vec{S}^{n^2\times n^2}_+$.
\end{definition}
\begin{lemma}
Let $\mathcal{A}\in\mathbb{R}^{n^4}_{cps}$. $\mathcal{A}\in \vec{S}^{n^2\times n^2}_+$ if and only if the matrix eigenvalue in Definition \ref{def:matrixeigen} are nonnegative.
\end{lemma}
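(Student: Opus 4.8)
The plan is to reduce the claim to the spectral theorem for the Hermitian square unfolding $M(\mathcal{A})$, exploiting that $M(\mathcal{A})$ is real symmetric for a real CPS tensor. First I would record the quadratic-form identity underlying the definition of matrix PSD: for $X\in\mathcal{S}_{\mathbb{R}}^{n}$ with vectorization $x\in\mathbb{R}^{n^2}$ given by $x_{(j-1)n+i}=X_{ij}$, the definitions yield
\[
\left<X,\mathcal{A}X\right>=\sum_{i,j,k,l}\mathcal{A}_{ijkl}X_{ij}\bar{X}_{kl}=\sum_{p,q}M(\mathcal{A})_{pq}x_px_q=x^{\top}M(\mathcal{A})x,
\]
where the middle equality uses $\bar{X}=X$ together with $M(\mathcal{A})_{(j-1)n+i,(l-1)n+k}=\mathcal{A}_{ijkl}$ (here $p=(j-1)n+i$, $q=(l-1)n+k$). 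Since $\mathcal{A}$ is real and Hermitian, $M(\mathcal{A})$ is real symmetric, so this form is real, consistent with the definition of $\vec{S}^{n^2\times n^2}_+$.

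Next I would identify the matrix eigenvalues of Definition \ref{def:matrixeigen} with the spectrum of $M(\mathcal{A})$ on the symmetric subspace. Let $V,W\subseteq\mathbb{R}^{n^2}$ be the vectorizations of the real symmetric and skew-symmetric matrices; they are orthogonal complements under the standard inner product. Using the CPS symmetry $\mathcal{A}_{ijkl}=\mathcal{A}_{jikl}$ one checks that $(\mathcal{A}E)_{ij}=(\mathcal{A}E)_{ji}$, so $M(\mathcal{A})$ maps $V$ into $V$; using $\mathcal{A}_{ijkl}=\mathcal{A}_{ijlk}$ one checks that $\mathcal{A}F=0$ for every skew-symmetric $F$, so $M(\mathcal{A})$ annihilates $W$. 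Because an eigenpair $\mathcal{A}E=\lambda E$ is simply $M(\mathcal{A})\,\mathrm{vec}(E)=\lambda\,\mathrm{vec}(E)$ with $\mathrm{vec}(E)\in V$, the matrix eigenvalues of $\mathcal{A}$ are exactly the eigenvalues of the real symmetric operator $M(\mathcal{A})|_V$; allowing complex symmetric $E$ in the definition does not enlarge this real set, as $M(\mathcal{A})|_V$ is real symmetric.

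Finally I would combine the two steps. By the identity above, $\mathcal{A}\in\vec{S}^{n^2\times n^2}_+$ means precisely that $x^{\top}M(\mathcal{A})x\ge0$ for all $x\in V$, i.e. that $M(\mathcal{A})|_V$ is positive semidefinite; by the spectral theorem this holds if and only if all eigenvalues of $M(\mathcal{A})|_V$ are nonnegative, which by the second step is exactly the nonnegativity of all matrix eigenvalues of $\mathcal{A}$. Both implications then follow simultaneously.

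The hard part will be the second step: one must make sure that the matrix eigenvalues in Definition \ref{def:matrixeigen} form the \emph{entire} spectrum of $M(\mathcal{A})|_V$, with no eigenvalue lost or spuriously introduced. This is what the invariance of $V$ and the vanishing of $M(\mathcal{A})$ on $W$ guarantee — the skew-symmetric part contributes only the eigenvalue $0$ and never enters the matrix bilinear form — and it is also why the equivalence is insensitive to whether the eigenmatrices are taken real or complex symmetric.
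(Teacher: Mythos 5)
Your proof is correct and takes essentially the approach the paper intends: the paper states this lemma without an explicit proof, but the machinery behind it is exactly the spectral decomposition of the Hermitian unfolding $M(\mathcal{A})$ used to prove Theorem \ref{thm:matrix_decomp}, where eigenvectors fold into symmetric eigenmatrices — which is precisely your reduction to the real symmetric operator $M(\mathcal{A})|_V$. Your two symmetry checks (invariance of the vectorized symmetric subspace via $\mathcal{A}_{ijkl}=\mathcal{A}_{jikl}$, annihilation of the skew part via $\mathcal{A}_{ijkl}=\mathcal{A}_{ijlk}$) and the remark that complex symmetric eigenmatrices in Definition \ref{def:matrixeigen} cannot enlarge the real spectrum are exactly the details the paper's bare statement leaves implicit, so nothing is missing.
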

According to the above lemma, we reobtain the result of Lemma 3.4 of \cite{jiang2017cones} that $\vec{S}^{n^2\times n^2}_+=\rm{cone}\{E\otimes E|E\in\mathcal{S}_{\mathbb{R}}^n\}$.
\begin{remark}
There exists $\mathcal{A}\in\mathbb{R}_{cps}^{n^4}$, such that $\mathcal{A}(x,x,x,x)$ for $x\in\mathbb{R}^n$ can be written as a sum-of-squares of polynomial functions, that is, $\mathcal{A}(x,x,x,x)=\sum_{i=1}^m(x^TE_ix)^2$ with $E_i\in\mathcal{S}_{\mathbb{R}}^n$, while $\mathcal{A}\notin \vec{S}^{n^2\times n^2}_+$. For instance, consider $\mathcal{A}\in\mathbb{R}_{cps}^{2^4}$, $\mathcal{A}_{1122}=\mathcal{A}_{2211}=1$, then $\mathcal{A}(x,x,x,x)=2x_1^2x_2^2$. On the other hand, $\mathcal{A}$ has a negative matrix eigenvalue in the orthogonal matrix decomposition, thus it will not be matrix PSD. If $x=(i,1)\in\mathbb{C}^2$,  $\mathcal{A}(x,x,\bar{x},\bar{x})=x_1^2\bar{x}_2^2+x_2^2\bar{x}_1^2=-2<0$, it will not always be nonnegative over complex field.
\end{remark}

Yang et al. offered a new matrix PSD, the general PSD, for the fourth-order real CPS tensor, which is closely linked to Yuan's lemma.
\begin{definition}\citep[Definition 2.4]{yang2019extension}
A fourth-order tensor $\mathcal{A}\in\mathbb{R}_{cps}^{n^4}$ is called general PSD if
\begin{equation*}\label{equ:real_generalpsd}
\left<X,\mathcal{A}X\right>\ge 0,~\forall X\in\mathcal{S}_+^n.
\end{equation*}
The set of all general PSD tensors in $\mathbb{R}_{cps}^{n^4}$ is denoted by $PSD_+^{4,n}$.
\end{definition}

It is obvious that $\vec{S}^{n^2\times n^2}_+\subset PSD_+^{4,n}\subset \mathbb{R}CPS_+^{n^4}$. The three sets are all closed convex cones. And according to the bipolar theorem \citep{boyd2004convex} and Carath\'eodory's theorem, we have three primal-dual pairs in $\mathbb{R}_{cps}^{n^4}$.
\begin{lemma}\label{lem:dual}
$\mathbb{R}CPS_{+}^{n^4}=(\rm{cone}\{x^{\otimes 4}|x\in\mathbb{R}^n\})^*$, $(\mathbb{R}CPS_{+}^{n^4})^*=\rm{cone}\{x^{\otimes 4}|x\in\mathbb{R}^n\}$;
$\vec{S}^{n^2\times n^2}_+=(\rm{cone}\{X\otimes X|X\in\mathcal{S}_{\mathbb{R}}^n\})^*=(\vec{S}^{n^2\times n^2}_+)^*$ is self-dual; $PSD_+^{4,n}=(\rm{cone}\{X\otimes X|X\in\mathcal{S}_+^n\})^*$, $(PSD_+^{4,n})^*=\rm{cone}\{X\otimes X|X\in\mathcal{S}_+^n\}$.
\end{lemma}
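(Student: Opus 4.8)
The plan is to reduce all three primal--dual identities to the elementary observation that each positivity definition is precisely a dual-cone membership, and then to invoke the bipolar theorem. In the real setting the defining forms are genuine inner products against rank-one-type generators: since $\mathcal{A}(x,x,x,x)=\langle\mathcal{A},x^{\otimes4}\rangle$ and, for $X=\bar X$, $\langle X,\mathcal{A}X\rangle=\langle\mathcal{A},X\otimes X\rangle$, membership in $\mathbb{R}CPS_+^{n^4}$, $\vec{S}^{n^2\times n^2}_+$ and $PSD_+^{4,n}$ means exactly that $\mathcal{A}$ pairs nonnegatively with every $x^{\otimes4}$, with every $X\otimes X$ for $X\in\mathcal{S}_{\mathbb{R}}^n$, and with every $X\otimes X$ for $X\in\mathcal{S}_+^n$, respectively. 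Because the dual of a set coincides with the dual of the convex cone it generates, this yields at once the three first equalities $\mathbb{R}CPS_+^{n^4}=(\mathrm{cone}\{x^{\otimes4}\})^*$, $\vec{S}^{n^2\times n^2}_+=(\mathrm{cone}\{X\otimes X\mid X\in\mathcal{S}_{\mathbb{R}}^n\})^*$ and $PSD_+^{4,n}=(\mathrm{cone}\{X\otimes X\mid X\in\mathcal{S}_+^n\})^*$.

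For the reverse direction I would apply the bipolar theorem $(K^*)^*=K$, valid for every closed convex cone $K$. Taking duals in the identities above turns, for instance, $\mathbb{R}CPS_+^{n^4}=K^*$ with $K=\mathrm{cone}\{x^{\otimes4}\}$ into $(\mathbb{R}CPS_+^{n^4})^*=(K^*)^*=\overline K$, and similarly for $PSD_+^{4,n}$. Thus the claimed formulas $(\mathbb{R}CPS_+^{n^4})^*=\mathrm{cone}\{x^{\otimes4}\}$ and $(PSD_+^{4,n})^*=\mathrm{cone}\{X\otimes X\mid X\in\mathcal{S}_+^n\}$ follow once these generating cones are known to be closed, so that $\overline K=K$. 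Self-duality of $\vec{S}^{n^2\times n^2}_+$ does not even require the bipolar theorem: combining its first equality with the identity $\vec{S}^{n^2\times n^2}_+=\mathrm{cone}\{E\otimes E\mid E\in\mathcal{S}_{\mathbb{R}}^n\}$ recorded above gives directly $(\vec{S}^{n^2\times n^2}_+)^*=(\mathrm{cone}\{E\otimes E\})^*=\vec{S}^{n^2\times n^2}_+$.

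The one genuine technical point, and the step I expect to be the main obstacle, is the closedness of the generating cones, since the bipolar theorem by itself returns only $\overline K$. Here Carath\'eodory's theorem enters: I would show each generating set admits a compact base lying in a hyperplane off the origin, which forces its conic hull to be closed. Concretely, normalizing $x^{\otimes4}$ by $\|x\|=1$ yields a compact set on which a fixed linear functional (the pairing against the tensor inducing $y\mapsto\|y\|^4$) is identically $1$; likewise, normalizing $X\in\mathcal{S}_+^n$ by $\mathrm{tr}\,X=1$ places the set $\{X\otimes X\}$ on the hyperplane $\langle\,\cdot\,,I\otimes I\rangle=1$, since $\langle X\otimes X,I\otimes I\rangle=(\mathrm{tr}\,X)^2$. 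In each case the normalized generator set is the continuous image of a compact set, hence compact, and Carath\'eodory bounds the number of summands needed, so that the conic hull of a compact base disjoint from the origin is closed. Finally I would note that $\mathbb{R}CPS_+^{n^4}$, $\vec{S}^{n^2\times n^2}_+$ and $PSD_+^{4,n}$ are themselves closed convex cones, being intersections of the closed half-spaces $\langle\mathcal{A},\cdot\rangle\ge0$, so all the dualities above are legitimate.
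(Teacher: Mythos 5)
Your proposal is correct and takes essentially the same route as the paper, which proves the lemma only by a one-line appeal to the bipolar theorem and Carath\'eodory's theorem --- precisely the two ingredients you use, with the details the paper leaves implicit (each positivity condition as dual-cone membership; closedness of the generating cones via compact bases lying on a hyperplane off the origin, e.g.\ $\langle X\otimes X, I\otimes I\rangle=(\mathrm{tr}\,X)^2=1$) filled in correctly. Your shortcut for self-duality, taking duals in the identity $\vec{S}^{n^2\times n^2}_+=\mathrm{cone}\{E\otimes E\mid E\in\mathcal{S}_{\mathbb{R}}^n\}$ recorded just before the lemma, is also sound and neatly avoids the closedness question for that generating set, where the trace normalization would fail.
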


We will show that a hierarchical relationship
\begin{equation}\label{equ:real_relation}
\vec{S}^{n^2\times n^2}_+\subsetneq PSD_+^{4,n}\subsetneq \mathbb{R}CPS_+^{n^4}
\end{equation}
exists. First for the real symmetric fourth-order tensors, according to Theorem 3.3 of \cite{jiang2017cones}, we have
\begin{equation}\label{equ:jiang}
\rm{cone}\{a^{\otimes 4}|a\in\mathbb{R}^n\}=\rm{sym}~cone\{A\otimes A|A\in\mathcal{S}_+^n\}.
\end{equation}
\begin{theorem}\label{thm:real_sym}
If $\mathcal{A}\in\mathbb{R}_s^{n^4}$, then $\mathcal{A}\in\mathbb{R}CPS_+^{n^4}$ is identity to $\mathcal{A}\in PSD_+^{4,n}$.
\end{theorem}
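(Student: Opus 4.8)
The plan is to prove the two inclusions separately, restricted to the subspace $\mathbb{R}_s^{n^4}$. The inclusion $PSD_+^{4,n}\subseteq\mathbb{R}CPS_+^{n^4}$ already holds on all of $\mathbb{R}_{cps}^{n^4}$ by the hierarchy noted just before \eqref{equ:real_relation}, so in particular it holds for symmetric tensors. Thus the only direction requiring work is the reverse inclusion $\mathbb{R}CPS_+^{n^4}\cap\mathbb{R}_s^{n^4}\subseteq PSD_+^{4,n}$, and this is precisely where equation \eqref{equ:jiang} carries the argument.

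The key observation I would exploit is that when $\mathcal{A}$ is symmetric, its inner product with any tensor $\mathcal{B}$ is insensitive to full symmetrization of $\mathcal{B}$. Indeed, relabelling the summation indices by a permutation $\pi$ and using the invariance $\mathcal{A}_{\pi(ijkl)}=\mathcal{A}_{ijkl}$ gives $\langle\mathcal{A},\mathcal{B}\rangle=\langle\mathcal{A},\mathcal{B}_\pi\rangle$ for every $\pi$, where $\mathcal{B}_\pi$ denotes $\mathcal{B}$ with its indices permuted; averaging over all $\pi$ then yields $\langle\mathcal{A},\mathcal{B}\rangle=\langle\mathcal{A},\mathrm{sym}(\mathcal{B})\rangle$. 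With this in hand, take any $\mathcal{A}\in\mathbb{R}CPS_+^{n^4}\cap\mathbb{R}_s^{n^4}$ and any $X\in\mathcal{S}_+^n$. Then $\langle X,\mathcal{A}X\rangle=\langle\mathcal{A},X\otimes X\rangle=\langle\mathcal{A},\mathrm{sym}(X\otimes X)\rangle$. Since $\mathrm{sym}(X\otimes X)\in\mathrm{sym}~\mathrm{cone}\{A\otimes A\mid A\in\mathcal{S}_+^n\}$, equation \eqref{equ:jiang} lets me rewrite it as $\sum_k c_k\,a_k^{\otimes 4}$ with $c_k\ge 0$ and $a_k\in\mathbb{R}^n$. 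Hence $\langle\mathcal{A},\mathrm{sym}(X\otimes X)\rangle=\sum_k c_k\,\mathcal{A}(a_k,a_k,a_k,a_k)\ge 0$, because each quartic term is nonnegative by $\mathcal{A}\in\mathbb{R}CPS_+^{n^4}$. As $X\in\mathcal{S}_+^n$ was arbitrary, $\mathcal{A}\in PSD_+^{4,n}$, which finishes the reverse inclusion and hence the equivalence.

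Equivalently, and perhaps more cleanly, I would route the whole argument through Lemma \ref{lem:dual}: symmetry of $\mathcal{A}$ shows that requiring $\langle\mathcal{A},X\otimes X\rangle\ge 0$ for all $X\in\mathcal{S}_+^n$ is the same as $\mathcal{A}\in\bigl(\mathrm{sym}~\mathrm{cone}\{X\otimes X\mid X\in\mathcal{S}_+^n\}\bigr)^*$, and \eqref{equ:jiang} identifies this dual cone with $\bigl(\mathrm{cone}\{x^{\otimes 4}\mid x\in\mathbb{R}^n\}\bigr)^*=\mathbb{R}CPS_+^{n^4}$, giving both inclusions in one stroke. The only delicate point is the symmetrization identity $\langle\mathcal{A},\mathcal{B}\rangle=\langle\mathcal{A},\mathrm{sym}(\mathcal{B})\rangle$ for symmetric $\mathcal{A}$; everything else is a direct application of \eqref{equ:jiang} together with the dual pairs recorded in Lemma \ref{lem:dual}, so I expect no serious obstacle once that step is stated carefully.
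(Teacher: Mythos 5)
Your proposal is correct and takes essentially the same route as the paper's own proof, which rests on exactly the ingredients you identify: the symmetrization identity $\langle X,\mathcal{A}X\rangle=\langle\mathcal{A},X\otimes X\rangle=\langle\mathcal{A},\mathrm{sym}(X\otimes X)\rangle$ for symmetric $\mathcal{A}$, together with \eqref{equ:jiang} and the dual pairs of Lemma \ref{lem:dual}. Your second, ``one stroke'' formulation is in fact verbatim the paper's argument, and your first pass merely unwinds that duality into the two explicit inclusions.
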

\begin{proof}
Since $\mathcal{A}$ is real symmetric, $\left<X,\mathcal{A}X\right>=\left<\mathcal{A},X\otimes X\right>=\left<\mathcal{A},\rm{sym}(X\otimes X)\right>$. According to Lemma \ref{lem:dual} and \eqref{equ:jiang}, the desired result then follows.
\end{proof}

However, the following example shows that even for symmetric tensors, $\vec{S}^{n^2\times n^2}_+\subsetneq \mathbb{R}CPS_+^{n^4}$.
\begin{example}\label{exa:notsubset1}
Let $\mathcal{A}\in\mathbb{R}_s^{2^4}$ and $\mathcal{A}=x^{\otimes 4}+y^{\otimes 4}-\frac{1}{8}(x+y)^{\otimes 4}$, where $x=(1,0)^T,~y=(0,1)^T$. Then $\mathcal{A}\in\mathbb{R}CPS_+^{n^4}$, while $\mathcal{A}\notin\vec{S}^{n^2\times n^2}_+$.
\end{example}
\begin{proof}
For any $z=(a,b)^T\in\mathbb{R}^2$, we have
\[\mathcal{A}(z,z,z,z)=a^4+b^4-\frac{1}{8}(a+b)^4\ge 0.\]
Let $X=(\begin{smallmatrix}a&c\\c&b\end{smallmatrix})\in \mathcal{S}_{\mathbb{R}}^n$, we have
\[\left<X,\mathcal{A}X\right>=a^2+b^2-\frac{1}{8}(a+b+2c)^2,\]
which might be negative. For example, when taking $a=b=0,~c=1$, $\left<X,\mathcal{A}X\right>=-\frac{1}{2}<0$.
\end{proof}

Finally, we present two examples to complete the relationship \eqref{equ:real_relation} for general fourth-order real CPS tensors.
\begin{example}
Let $\mathcal{A}\in\mathbb{R}_{cps}^{2^4}$, with $\mathcal{A}_{1111}=\mathcal{A}_{2222}=1,~\mathcal{A}_{1212}=
\mathcal{A}_{2112}=\mathcal{A}_{1221}=\mathcal{A}_{2121}=1.5,
~\mathcal{A}_{1122}=\mathcal{A}_{2211}=-2$, and all other entries are zero. Thus there exists $\mathcal{A}\in \mathbb{R}CPS_+^{2^4}\backslash PSD_+^{4,2}$.
\end{example}
\begin{proof}
For any nonzero vector $x=(x_1,x_2)^T\in \mathbb{R}^2$, we have
$$\mathcal{A}x^4=(x_1^2+x_2^2)^2>0.$$
For $X=\left(\begin{smallmatrix}5&1\\-1&2\end{smallmatrix}\right)\in\mathcal{S}_+^2$,
$$\left<X,\mathcal{A}X\right>=-5<0.$$
Thus, the required result is obtained.
\end{proof}
\begin{example}\label{exa:notsubset2}
Let $\mathcal{A}\in\mathbb{R}_{cps}^{2^4}$, $\mathcal{A}=\lambda E_1\otimes E_1-E_2\otimes E_2$, where $E_1=\left(\begin{smallmatrix}-2&1\\1&-3\end{smallmatrix}\right)$ and $E_2=\left(\begin{smallmatrix}2&5\\5&2\end{smallmatrix}\right)$. There exists proper $\lambda>0$, such that $\mathcal{A}\in PSD_+^{4,2}\backslash\vec{S}^{2^2\times 2^2}_+$.
\end{example}
\begin{proof}
For any $X=\left(\begin{smallmatrix}a&c\\c&b\end{smallmatrix}\right)\in\mathcal{S}_+^n$, we have $a\ge0,~b\ge0,~ab-c^2\ge0$. To find a proper $\lambda>0$ such that $\mathcal{A}\in PSD_+^{4,2}$, we only need to find a $\lambda$ such that
\begin{equation*}
\begin{aligned}
\left<X,\mathcal{A}X\right>&=\lambda(2a+3b-2c)^2-(2a+2b+10c)^2\nonumber\\
(a,b\ge 0,ab\ge c^2)&\ge\lambda(2a+3b-2\sqrt{ab})^2-(2a+2b+10\sqrt{ab})^2\label{equ:compu1}\\
&\ge 0.
\end{aligned}
\end{equation*}
That is, $\lambda\ge(\frac{2a+3b-2\sqrt{ab}}{2a+2b+10\sqrt{ab}})^2,~\forall a\ge0,~ b\ge0$. Let $f(t)=\frac{2+2t^2+10t}{2+3t^2-2t}~(t=\frac{b}{a},~t\in [0,\infty])$, then $f(t)\ge 0$ is bounded by a constant $C$. We take $\lambda=C$, then $\mathcal{A}$ is general PSD.

On the other hand, $\left<E_1,E_2\right>=0$, then $\left<E_2,\mathcal{A}E_2\right>=-\|E_2\|_F^2<0$. Thus $\mathcal{A}$ is not matrix PSD.
\end{proof}
\subsection{Complex case}
For the complex case, we may give the following definitions analogous to real case.
\begin{definition}
A fourth-order tensor $\mathcal{A}\in\mathbb{C}_{cps}^{n^4}$ is called positive semidefinite (PSD) if
\begin{equation*}\label{equ:complex_psd}
\mathcal{A}(x,x,\bar{x},\bar{x})\ge 0,~\forall x\in\mathbb{C}^n.
\end{equation*}
The set of all PSD tensors in $\mathbb{C}_{cps}^{n^4}$ is denoted by $CPS^{n^4}_+$.
\end{definition}
\begin{definition}
A fourth-order tensor $\mathcal{A}\in\mathbb{C}_{cps}^{n^4}$ is called matrix PSD if
\begin{equation*}\label{equ:complex_matpsd}
\left<X,\mathcal{A}X\right>\ge 0,~\forall X\in\mathcal{S}_{\mathbb{C}}^n.
\end{equation*}
The set of all matrix PSD tensors in $\mathbb{C}_{cps}^{n^4}$ is denoted by $\mathbb{C}\vec{S}^{n^2\times n^2}_+$.
\end{definition}

Similar to the real case, $\mathcal{A}\in\mathbb{C}\vec{\mathcal{S}}_+^{n^2\times n^2}$ if and only if $\mathcal{A}\in\rm{cone}\{E\otimes \bar{E}|E\in\mathcal{S}_{\mathbb{C}}^n\}$. Since there is a bijection between CPS tensors and corresponding quartic real-valued symmetric conjugate forms \citep{jiang2016characterizing}, $\mathcal{A}\in\mathbb{C}\vec{\mathcal{S}}_+^{n^2\times n^2}$ is also equivalent to that $\mathcal{A}(x,x,\bar{x},\bar{x})=\sum_{i=1}^r|(x^TE_ix)|^2$. Such a complex quartic form $\mathcal{A}(x,x,\bar{x},\bar{x})$ belongs to the so called HSOS defined by \cite{nie2020hermitian}. It is obvious that $\mathcal{A}\in CPS_+^{n^4}$ if $\mathcal{A}\in\mathbb{C}\vec{\mathcal{S}}_+^{n^2\times n^2}$.
\begin{definition}
A fourth-order tensor $\mathcal{A}\in\mathbb{C}_{cps}^{n^4}$ is called general PSD if
\begin{equation*}\label{equ:complex_generalpsd}
\left<X,\mathcal{A}X\right>\ge 0,~\forall X\in\mathcal{H}_+^n.
\end{equation*}
The set of all general PSD tensors in $\mathbb{C}_{cps}^{n^4}$ is denoted by $HPSD_+^{4,n}$.
\end{definition}
\begin{lemma}\label{lem:skew-sym}
$\mathbb{C}\vec{S}^{n^2\times n^2}_+\subset HPSD_+^{4,n}$.
\end{lemma}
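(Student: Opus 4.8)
The plan is to use the partial symmetry built into a CPS tensor to reduce the positivity test over Hermitian positive semidefinite matrices to a positivity test over complex symmetric matrices, which is precisely the matrix PSD condition defining $\mathbb{C}\vec{S}^{n^2\times n^2}_+$. The point is that although $\mathcal{H}_+^n$ and $\mathcal{S}_{\mathbb{C}}^n$ are different sets of matrices, the value of the quadratic form $\langle X, \mathcal{A}X\rangle$ only ever sees the complex-symmetric part of $X$.

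First I would observe that for an arbitrary $X\in\mathbb{C}^{n\times n}$ the quantity $\langle X,\mathcal{A}X\rangle=\sum_{ijkl}\mathcal{A}_{ijkl}X_{ij}\bar{X}_{kl}$ depends only on the symmetric part $X_s:=(X+X^T)/2$. Indeed, since $\mathcal{A}$ is CPS we have $\mathcal{A}_{ijkl}=\mathcal{A}_{jikl}$ and $\mathcal{A}_{ijkl}=\mathcal{A}_{ijlk}$; averaging the inner summation over the index pair $(i,j)$ replaces $X_{ij}$ by $(X_s)_{ij}$, and averaging over $(k,l)$ replaces $\bar{X}_{kl}$ by $\overline{(X_s)_{kl}}$, which gives
\begin{equation*}
\langle X,\mathcal{A}X\rangle=\langle X_s,\mathcal{A}X_s\rangle,\quad X_s\in\mathcal{S}_{\mathbb{C}}^n.
\end{equation*}
Now take any $\mathcal{A}\in\mathbb{C}\vec{S}^{n^2\times n^2}_+$ and any $X\in\mathcal{H}_+^n$. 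Since $X_s$ is complex symmetric, the matrix PSD hypothesis yields $\langle X_s,\mathcal{A}X_s\rangle\ge 0$, and the identity above then gives $\langle X,\mathcal{A}X\rangle\ge 0$ for every $X\in\mathcal{H}_+^n$, i.e. $\mathcal{A}\in HPSD_+^{4,n}$. Alternatively, one may invoke the characterization stated just before the lemma, writing $\mathcal{A}=\sum_t\lambda_t E_t\otimes\bar{E}_t$ with $\lambda_t\ge 0$ and $E_t\in\mathcal{S}_{\mathbb{C}}^n$, and compute directly that $\langle X,(E_t\otimes\bar{E}_t)X\rangle=\bigl|\sum_{ij}(E_t)_{ij}X_{ij}\bigr|^2\ge 0$ for every $X$, whence $\langle X,\mathcal{A}X\rangle=\sum_t\lambda_t\bigl|\cdots\bigr|^2\ge 0$.

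There is no genuine obstacle here: the argument is a two-line computation once the symmetrization identity is in place. The only thing that must be checked with care is that the two partial-symmetry relations of $\mathcal{A}$ let us symmetrize $X$ independently in the first index pair $(i,j)$ and the second pair $(k,l)$, and that the symmetrized $X_s$ indeed lies in $\mathcal{S}_{\mathbb{C}}^n$; both are immediate from the CPS definition. It is worth remarking that the proof establishes the stronger fact that $\langle X,\mathcal{A}X\rangle\ge 0$ for \emph{all} $X\in\mathbb{C}^{n\times n}$, so the restriction of $X$ to the cone $\mathcal{H}_+^n$ plays no role, consistent with the already-noted inclusion $\mathbb{C}\vec{S}^{n^2\times n^2}_+\subset CPS_+^{n^4}$.
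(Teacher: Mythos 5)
Your proof is correct, and its main line is the paper's own argument in different packaging. The paper writes a Hermitian $X$ as $M+iN$ with $M\in\mathcal{S}_+^n$ and $N$ real skew-symmetric, and uses the partial symmetries $\mathcal{A}_{ijkl}=\mathcal{A}_{jikl}=\mathcal{A}_{ijlk}$ to show that every term involving $N$ vanishes, leaving $\left<X,\mathcal{A}X\right>=\left<M,\mathcal{A}M\right>\ge 0$. Your symmetrization identity $\left<X,\mathcal{A}X\right>=\left<X_s,\mathcal{A}X_s\right>$ is exactly that computation, because for Hermitian $X$ one has $X^T=\bar{X}$, hence $X_s=(X+X^T)/2=Re(X)=M$, and $X-X_s=iN$ is precisely the part the paper's argument annihilates. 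What your formulation buys is a little extra generality: the identity is stated for \emph{arbitrary} $X\in\mathbb{C}^{n\times n}$, so it shows at once that matrix PSD forces $\left<X,\mathcal{A}X\right>\ge 0$ for every complex matrix, which also subsumes the paper's display \eqref{equ:mpsd_equal} and its Corollary \ref{thm:mpsd_equal} (that membership in $HPSD_+^{4,n}$ can be tested on $\mathcal{S}_+^n$ alone). Your alternative second argument, writing $\mathcal{A}=\sum_t\lambda_t E_t\otimes\bar{E}_t$ with $\lambda_t\ge 0$ and using $\left<X,(E_t\otimes\bar{E}_t)X\right>=\bigl|\sum_{i,j}(E_t)_{ij}X_{ij}\bigr|^2\ge 0$, is a genuinely different and even shorter route; be aware, though, that it leans on the characterization $\mathbb{C}\vec{S}^{n^2\times n^2}_+=\mathrm{cone}\{E\otimes\bar{E}\,|\,E\in\mathcal{S}_{\mathbb{C}}^n\}$, which the paper asserts just before the lemma by analogy with the real case and does not prove, so the self-contained symmetrization argument is the preferable one to feature.
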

\begin{proof}
Let $\mathcal{A}\in\mathbb{C}\vec{S}^{n^2\times n^2}_+$. For any $X\in\mathcal{H}_+^n$, we have $X=M+iN$ where $M\in\mathcal{S}_+^n$ and $N$ is a real skew-symmetric matrix. For any $Y\in \mathcal{S}^n_{\mathbb{R}}$, we have
\begin{equation*}
\begin{aligned}
\left<N,\mathcal{A}Y\right>=\sum\limits_{ijkl}\mathcal{A}_{ijkl}Y_{ij}N_{kl}
=\sum\limits_{ijkl}\mathcal{A}_{ijkl}Y_{ij}(-N_{lk})=-\sum\limits_{ijkl}\mathcal{A}_{ijlk}Y_{ij}N_{lk}
=-\left<N,\mathcal{A}Y\right>.
\end{aligned}
\end{equation*}
Thus, $\left<N,\mathcal{A}Y\right>=0$. The third equality comes from the partial symmetry of $\mathcal{A}$. And $\left<Y,\mathcal{A}N\right>=0$ also holds for any $Y\in\mathcal{S}^n_{\mathbb{R}}$.
Then we obtain
\begin{equation}\label{equ:mpsd_equal}
\begin{aligned}
\left<X,\mathcal{A}X\right>&=\left<M,\mathcal{A}M\right>+\left<N,\mathcal{A}N\right>
+i\left<M,\mathcal{A}N\right>-i\left<N,\mathcal{A}M\right>\\
&=\left<M,\mathcal{A}M\right>\ge 0.
\end{aligned}
\end{equation}
That is, $\mathcal{A}\in HPSD_+^{4,n}$.
\end{proof}
\begin{corollary}\label{thm:mpsd_equal}
Let $\mathcal{A}\in\mathbb{C}_{cps}^{n^4}$, $\mathcal{A}\in HPSD_+^{4,n}$ if and only if $\mathcal{A}\in PSD_+^{4,n}$.
\end{corollary}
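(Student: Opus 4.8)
The plan is to reduce the complex Hermitian quadratic form to its real symmetric part, which is exactly the content already extracted in the proof of Lemma~\ref{lem:skew-sym}. Given an arbitrary $X\in\mathcal{H}_+^n$, I would write $X = M + iN$ with $M$ real symmetric and $N$ real skew-symmetric, and expand
\begin{equation*}
\langle X, \mathcal{A}X\rangle = \langle M, \mathcal{A}M\rangle + \langle N, \mathcal{A}N\rangle + i\langle N, \mathcal{A}M\rangle - i\langle M, \mathcal{A}N\rangle .
\end{equation*}
The partial symmetry $\mathcal{A}_{ijkl}=\mathcal{A}_{jikl}=\mathcal{A}_{ijlk}$ makes $\sum_{ij}\mathcal{A}_{ijkl}N_{ij}=0$ and $\sum_{kl}\mathcal{A}_{ijkl}N_{kl}=0$ for the skew-symmetric $N$, so each of the three $N$-terms vanishes and $\langle X,\mathcal{A}X\rangle = \langle M,\mathcal{A}M\rangle$ for \emph{every} Hermitian $X$. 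The point I want to stress is that this reduction uses only the partial symmetry of $\mathcal{A}$ and not its membership in $\mathbb{C}\vec{S}^{n^2\times n^2}_+$, so it is available for an arbitrary $\mathcal{A}\in\mathbb{C}_{cps}^{n^4}$.

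With the identity in hand, both inclusions are short. Since $\mathcal{S}_+^n\subset\mathcal{H}_+^n$, the $HPSD$ condition tests $\langle X,\mathcal{A}X\rangle\ge0$ over a larger cone than the general-PSD condition, so $HPSD_+^{4,n}\subseteq PSD_+^{4,n}$ is immediate from the definitions. For the reverse inclusion I would first record that the real part $M$ of any $X\in\mathcal{H}_+^n$ lies in $\mathcal{S}_+^n$: for real $v$ one has $v^T M v = v^H X v\ge0$ because $v^T N v=0$ for a skew-symmetric $N$. Then, assuming $\mathcal{A}\in PSD_+^{4,n}$ and taking any $X\in\mathcal{H}_+^n$, the identity gives $\langle X,\mathcal{A}X\rangle = \langle M,\mathcal{A}M\rangle\ge0$ since $M\in\mathcal{S}_+^n$, hence $\mathcal{A}\in HPSD_+^{4,n}$.

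I do not expect a genuine obstacle: the corollary is essentially a repackaging of the computation in Lemma~\ref{lem:skew-sym}, with the added remark that the reduction $\langle X,\mathcal{A}X\rangle=\langle M,\mathcal{A}M\rangle$ holds for all CPS tensors. The only care needed is to interpret $\mathcal{A}\in PSD_+^{4,n}$ for a complex tensor as the defining inequality $\langle X,\mathcal{A}X\rangle\ge0$ over $X\in\mathcal{S}_+^n$, and to check (as above) that the real and imaginary parts of a Hermitian PSD matrix behave as claimed; both are routine.
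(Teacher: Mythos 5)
Your proposal is correct and follows essentially the same route as the paper, whose proof simply declares the forward direction obvious and invokes the identity \eqref{equ:mpsd_equal} from Lemma~\ref{lem:skew-sym} for the converse. You merely make explicit two points the paper leaves implicit --- that the reduction $\langle X,\mathcal{A}X\rangle=\langle M,\mathcal{A}M\rangle$ uses only partial symmetry (so it applies to any $\mathcal{A}\in\mathbb{C}_{cps}^{n^4}$, not just matrix-PSD ones) and that $Re(X)\in\mathcal{S}_+^n$ for $X\in\mathcal{H}_+^n$ --- both of which are accurate and welcome clarifications rather than deviations.
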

\begin{proof}
The necessary condition is obvious, the sufficient condition follows from \eqref{equ:mpsd_equal}.
\end{proof}
\begin{theorem}\label{thm:psd_equal}
Let $\mathcal{A}\in\mathbb{R}_{cps}^{n^4}$, $\mathcal{A}\in \mathbb{C}\vec{S}^{n^2\times n^2}_+$ if and only if $\mathcal{A}\in \vec{S}^{n^2\times n^2}_+$.
\end{theorem}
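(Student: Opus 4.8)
The plan is to dispose of one implication immediately and concentrate on the reverse one, which runs parallel to the proof of Lemma \ref{lem:skew-sym}. Since $\mathcal{S}_{\mathbb{R}}^n\subset\mathcal{S}_{\mathbb{C}}^n$, the implication $\mathcal{A}\in\mathbb{C}\vec{S}^{n^2\times n^2}_+\Rightarrow\mathcal{A}\in\vec{S}^{n^2\times n^2}_+$ is trivial: requiring $\langle X,\mathcal{A}X\rangle\ge 0$ over all complex symmetric $X$ in particular forces it over all real symmetric $X$. The substantive content is the converse, namely that matrix PSD tested only against $\mathcal{S}_{\mathbb{R}}^n$ already propagates to $\mathcal{S}_{\mathbb{C}}^n$ once $\mathcal{A}$ is a real CPS tensor.

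For the converse, I would take an arbitrary $X\in\mathcal{S}_{\mathbb{C}}^n$ and split it as $X=P+iQ$ with $P,Q$ real matrices. The symmetry $X^T=X$ forces $P^T=P$ and $Q^T=Q$, so both $P$ and $Q$ lie in $\mathcal{S}_{\mathbb{R}}^n$. Expanding via $\langle X,\mathcal{A}X\rangle=\sum_{ijkl}\mathcal{A}_{ijkl}X_{ij}\bar{X}_{kl}$ with $\bar X=P-iQ$, the product $X_{ij}\bar X_{kl}$ separates into the real part $P_{ij}P_{kl}+Q_{ij}Q_{kl}$ and the imaginary part $Q_{ij}P_{kl}-P_{ij}Q_{kl}$. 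Because $\mathcal{A}$ is real this gives
\[\langle X,\mathcal{A}X\rangle=\big(\langle P,\mathcal{A}P\rangle+\langle Q,\mathcal{A}Q\rangle\big)+i\sum_{ijkl}\mathcal{A}_{ijkl}(Q_{ij}P_{kl}-P_{ij}Q_{kl}).\]

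The crux is verifying that the imaginary term vanishes. Relabelling $(i,j)\leftrightarrow(k,l)$ in $\sum_{ijkl}\mathcal{A}_{ijkl}Q_{ij}P_{kl}$ and invoking $\mathcal{A}_{ijkl}=\bar{\mathcal{A}}_{klij}=\mathcal{A}_{klij}$ (the Hermitian property combined with realness yields the block-swap symmetry) shows $\sum_{ijkl}\mathcal{A}_{ijkl}Q_{ij}P_{kl}=\sum_{ijkl}\mathcal{A}_{ijkl}P_{ij}Q_{kl}$, so the two summands cancel. Hence $\langle X,\mathcal{A}X\rangle=\langle P,\mathcal{A}P\rangle+\langle Q,\mathcal{A}Q\rangle$, and since $P,Q\in\mathcal{S}_{\mathbb{R}}^n$ together with $\mathcal{A}\in\vec{S}^{n^2\times n^2}_+$ make both summands nonnegative, we conclude $\langle X,\mathcal{A}X\rangle\ge 0$ and thus $\mathcal{A}\in\mathbb{C}\vec{S}^{n^2\times n^2}_+$.

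The sole genuine obstacle is this cancellation of the imaginary part; the remaining steps are routine bookkeeping. It is worth stressing that the cancellation relies essentially on $\mathcal{A}$ being real, so that $\mathcal{A}_{ijkl}=\mathcal{A}_{klij}$ rather than merely $\mathcal{A}_{ijkl}=\bar{\mathcal{A}}_{klij}$; this is precisely the extra hypothesis that separates the present statement from the general complex-coefficient situation.
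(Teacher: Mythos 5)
Your proof is correct and takes essentially the same approach as the paper: the paper omits the proof of Theorem~\ref{thm:psd_equal}, declaring it ``similar to Lemma~\ref{lem:skew-sym},'' and your argument is exactly that template adapted to complex symmetric $X$ --- split $X=P+iQ$ with $P,Q\in\mathcal{S}_{\mathbb{R}}^n$ and cancel the imaginary cross terms, here via the block-swap symmetry $\mathcal{A}_{ijkl}=\mathcal{A}_{klij}$ rather than via skew-symmetry as in the lemma. Your closing observation that realness of $\mathcal{A}$ (not mere Hermitianity $\mathcal{A}_{ijkl}=\bar{\mathcal{A}}_{klij}$) is what drives the cancellation is precisely the right point.
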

The proof is similar to Lemma \ref{lem:skew-sym}, we omit it. With a modification of Examples \ref{exa:notsubset1} and \ref{exa:notsubset2}, we obtain that $\mathbb{C}\vec{S}_+^{n^2\times n^2}\subsetneq CPS_+^{n^4}$ and $\mathbb{C}\vec{S}_+^{n^2\times n^2}\subsetneq HPSD_+^{4,n}$. The main differences from the real case is encountered in the relationship between $CPS_+^{n^4}$ and $HPSD_+^{4,n}$.
\begin{theorem}
If $\mathcal{A}\in\mathbb{C}_{cps}^{n^4}$ is symmetric, then $A\in CPS_+^{n^4}$ is identity to $\mathcal{A}\in HPSD_+^{4,n}$.
\end{theorem}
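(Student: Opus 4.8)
The plan is to reduce the stated equivalence to the corresponding equivalence between \emph{real} positivity and \emph{complex conjugate} positivity, and then to settle that equivalence by a rotational averaging argument. First I would observe that a symmetric CPS tensor is automatically real: full symmetry gives $\mathcal{A}_{ijkl}=\mathcal{A}_{klij}$, while the Hermitian property gives $\mathcal{A}_{ijkl}=\bar{\mathcal{A}}_{klij}$, so $\mathcal{A}_{ijkl}=\bar{\mathcal{A}}_{ijkl}$ and hence $\mathcal{A}\in\mathbb{R}_s^{n^4}$. Consequently both Theorem \ref{thm:real_sym} and Corollary \ref{thm:mpsd_equal} apply to $\mathcal{A}$, and chaining them yields $\mathcal{A}\in HPSD_+^{4,n}\Leftrightarrow\mathcal{A}\in PSD_+^{4,n}\Leftrightarrow\mathcal{A}\in\mathbb{R}CPS_+^{n^4}$. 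Thus it suffices to prove that, for such a real symmetric $\mathcal{A}$, one has $\mathcal{A}\in CPS_+^{n^4}$ if and only if $\mathcal{A}\in\mathbb{R}CPS_+^{n^4}$.

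The direction $CPS_+^{n^4}\Rightarrow\mathbb{R}CPS_+^{n^4}$ is immediate: restricting the conjugate form to real arguments $x\in\mathbb{R}^n$ gives $\mathcal{A}(x,x,\bar{x},\bar{x})=\mathcal{A}(x,x,x,x)\ge 0$. For the converse I would expand the conjugate form into real and imaginary parts. Writing $x=u+iv$ with $u,v\in\mathbb{R}^n$ and using that the associated symmetric multilinear form $\mathcal{A}(\cdot,\cdot,\cdot,\cdot)$ is multilinear and invariant under permutations of its arguments, all terms carrying an odd total power of $v$ cancel, and a direct expansion gives $\mathcal{A}(x,x,\bar{x},\bar{x})=\mathcal{A}(u,u,u,u)+2\mathcal{A}(u,u,v,v)+\mathcal{A}(v,v,v,v)$.

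It then remains to show that this combination is nonnegative whenever $\mathcal{A}(w,w,w,w)\ge 0$ for every real $w$. The key idea is to average $\mathcal{A}(w_\theta,w_\theta,w_\theta,w_\theta)$ over the rotated real vectors $w_\theta=\cos\theta\,u+\sin\theta\,v\in\mathbb{R}^n$. Expanding by the symmetric multinomial and integrating over $\theta\in[0,2\pi]$, the odd trigonometric moments vanish and the surviving moments of $\cos^4$, $\sin^4$ and $\cos^2\sin^2$ combine into $\int_0^{2\pi}\mathcal{A}(w_\theta,w_\theta,w_\theta,w_\theta)\,d\theta=\frac{3\pi}{4}\big[\mathcal{A}(u,u,u,u)+2\mathcal{A}(u,u,v,v)+\mathcal{A}(v,v,v,v)\big]$. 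Since every integrand value is $\ge 0$ by hypothesis, the right-hand side is nonnegative, which forces $\mathcal{A}(x,x,\bar{x},\bar{x})\ge 0$ and hence $\mathcal{A}\in CPS_+^{n^4}$. The main obstacle is exactly this backward direction: one must upgrade positivity along the real axis to positivity of the mixed $(u,v)$-combination, and the decisive point is recognizing that this combination is precisely (a constant multiple of) the rotational average of real quartic values, which the explicit trigonometric moment computation confirms; a finite equispaced quadrature with at least five nodes would serve equally well for those who prefer to avoid the integral.
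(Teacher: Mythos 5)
Your proof is correct, but its forward direction takes a genuinely different route from the paper's. The backward direction ($\mathcal{A}\in CPS_+^{n^4}\Rightarrow\mathcal{A}\in HPSD_+^{4,n}$) matches the paper exactly: observe that a symmetric CPS tensor is real (you actually prove this via $\mathcal{A}_{ijkl}=\mathcal{A}_{klij}=\bar{\mathcal{A}}_{ijkl}$, where the paper merely asserts it), restrict to real arguments to land in $\mathbb{R}CPS_+^{n^4}$, then chain Theorem \ref{thm:real_sym} with Corollary \ref{thm:mpsd_equal}. For the forward direction, however, the paper has a one-line argument that never leaves the complex setting: full symmetry allows the slot permutation $\mathcal{A}(x,x,\bar{x},\bar{x})=\mathcal{A}(x,\bar{x},\bar{x},x)=\left\langle xx^H,\mathcal{A}\,xx^H\right\rangle$, and since $xx^H\in\mathcal{H}_+^n$ the general PSD hypothesis gives nonnegativity immediately. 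You instead run the equivalence chain $HPSD_+^{4,n}\Leftrightarrow PSD_+^{4,n}\Leftrightarrow\mathbb{R}CPS_+^{n^4}$ and close the remaining gap $\mathbb{R}CPS_+^{n^4}\Rightarrow CPS_+^{n^4}$ by rotational averaging; I verified both computations this rests on. Writing $x=u+iv$ and using full symmetry, the odd-in-$v$ terms $A(u,u,u,v)$ and $A(u,v,v,v)$ do cancel, leaving $A(u,u,u,u)+2A(u,u,v,v)+A(v,v,v,v)$; and the moments $\int_0^{2\pi}\cos^4\theta\,d\theta=\int_0^{2\pi}\sin^4\theta\,d\theta=\tfrac{3\pi}{4}$ and $\int_0^{2\pi}\cos^2\theta\sin^2\theta\,d\theta=\tfrac{\pi}{4}$ make the average exactly $\tfrac{3\pi}{4}\,\mathcal{A}(x,x,\bar{x},\bar{x})$, since $6\cdot\tfrac{\pi}{4}=2\cdot\tfrac{3\pi}{4}$ (your remark on exactness of equispaced quadrature with five nodes is also right, as the integrand is a trigonometric polynomial of degree four). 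Comparing the two: the paper's substitution $X=xx^H$ is shorter and uses only symmetry plus the HPSD hypothesis, whereas your averaging lemma, though heavier here, is a self-contained fact of independent interest --- for real symmetric quartic tensors, nonnegativity of the form on $\mathbb{R}^n$ already forces nonnegativity of the conjugate form on $\mathbb{C}^n$ --- giving a direct, elementary proof that $\mathbb{R}CPS_+^{n^4}$ and $CPS_+^{n^4}$ agree on $\mathbb{R}_s^{n^4}$ without passing through matrix-variable cones.
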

\begin{proof}
If $\mathcal{A}\in HPSD_+^{4,n}$, according to the symmetry of $\mathcal{A}$, we have
\[\mathcal{A}(x,x,\bar{x},\bar{x})=\mathcal{A}(x,\bar{x},\bar{x},x)
=\left<xx^H,\mathcal{A}xx^H\right>\ge 0.\]
Thus, $\mathcal{A}\in CPS_+^{n^4}$.

On the other hand, if the CPS tensor $\mathcal{A}$ is also symmetric, then $\mathcal{A}$ must be a real symmetric tensor. If $\mathcal{A}\in CPS_+^{n^4}$, then $\mathcal{A}\in\mathbb{R}CPS_+^{n^4}$. According to Theorem \ref{thm:real_sym}, $\mathcal{A}\in PSD_+^{4,n}$. Combined with Corollary \ref{thm:mpsd_equal}, we have $\mathcal{A}\in HPSD_+^{4,n}$.
\end{proof}

Finally, we present two examples that show $HPSD_+^{4,n}\not\subset CPS_+^{n^4}$ and $CPS_+^{n^4}\not\subset HPSD_+^{4,n}$.
\begin{example}
Let $\mathcal{A}\in \mathbb{R}_{cps}^{2^4}$, with $\mathcal{A}_{1111}=\mathcal{A}_{2222}=1$, $\mathcal{A}_{1212}=\mathcal{A}_{2112}=\mathcal{A}_{1221}=\mathcal{A}_{2121}=1.5$, $\mathcal{A}_{1122}=-2-2i,~\mathcal{A}_{2211}=-2+2i$, and all other entries are zero. Then $\mathcal{A}\in CPS^{2^4}_+$ and $\mathcal{A}\notin HPSD^{4,2}_+$.
\end{example}
\begin{proof}
For any nonzero vector $x=(x_1,x_2)^T\in \mathbb{R}^2$, we have
\begin{equation*}
\begin{aligned}
\mathcal{A}(x,x,\bar{x},\bar{x})&=|x_1|^4+|x_2|^4+6|x_1|^2|x_2|^2+4Re(ix_1x_1\bar{x}_2\bar{x}_2)-4Re(x_1x_1\bar{x}_2\bar{x}_2)\\
&\ge |x_1|^4+|x_2|^4+6|x_1|^2|x_2|^2-8|x_1|^2|x_2|^2=(|x_1|^2-|x_2|^2)^2\ge 0.
\end{aligned}
\end{equation*}
For $X=\left(\begin{smallmatrix}5&i\\-i&2\end{smallmatrix}\right)\in \mathcal{H}_+^2$,
\begin{equation*}
\begin{aligned}
\left<X,\mathcal{A}X\right>&=|X_{11}|^2+|X_{22}|^2+1.5(|X_{12}|^2+|X_{11}|^2+2Re(\bar{X_{12}}X_{21}))\\
&-2i\bar{X}_{11}X_{22}+2i\bar{X}_{22}X_{11}-2\bar{X}_{11}X_{22}-2\bar{X}_{22}X_{11}\\
&=-11<0.
\end{aligned}
\end{equation*}
Thus, the required result is obtained.
\end{proof}
\begin{example}
Let $\mathcal{A}$ be defined as Example \ref{exa:notsubset2}, it is easy to verify that $\mathcal{A}\in HPDS^{4,n}_+$ according to Corollary \ref{thm:mpsd_equal}; while for $x=(i,\frac{\sqrt{5}}{3}+i\frac{1}{3})^T$, $\mathcal{A}(x,x,\bar{x},\bar{x})<0$, so $\mathcal{A}\notin CPS_+^{n^4}$.
\end{example}
\begin{theorem}
Let $\mathcal{A}\in CPS_+^{n^4}$, if $r=rank_{cps}(\mathcal{A})\le n$, then $\mathcal{A}\in \mathbb{C}\vec{S}_+^{n^2\times n^2}$ and $\mathcal{A}\in HPSD^{4,n}_+$ generally.
\end{theorem}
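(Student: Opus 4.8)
The plan is to reduce the two conclusions to a single one and then exploit the CPS rank decomposition together with the smallness of the rank. First I would observe that by Lemma \ref{lem:skew-sym} we have $\mathbb{C}\vec{S}_+^{n^2\times n^2}\subset HPSD_+^{4,n}$, so it suffices to prove the first assertion $\mathcal{A}\in\mathbb{C}\vec{S}_+^{n^2\times n^2}$; membership in $HPSD_+^{4,n}$ is then automatic.

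For the first assertion I would start from a CPS rank decomposition
\[\mathcal{A}=\sum_{i=1}^r\mu_i a_i^{\otimes 2}\otimes\bar a_i^{\otimes 2},\quad \mu_i\in\mathbb{R},~a_i\in\mathbb{C}^n,~r=rank_{cps}(\mathcal{A})\le n,\]
guaranteed by Theorem \ref{thm:cps_decomp}. Evaluating the associated conjugate form gives the clean expression $\mathcal{A}(x,x,\bar x,\bar x)=\sum_{i=1}^r\mu_i|a_i^Tx|^4$, so the hypothesis $\mathcal{A}\in CPS_+^{n^4}$ reads $\sum_{i=1}^r\mu_i|a_i^Tx|^4\ge 0$ for every $x\in\mathbb{C}^n$. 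Since the cone characterization $\mathbb{C}\vec{S}_+^{n^2\times n^2}=\mathrm{cone}\{E\otimes\bar E\mid E\in\mathcal{S}_{\mathbb{C}}^n\}$ holds, and $\mathcal{A}=\sum_i\mu_i (a_ia_i^T)\otimes\overline{(a_ia_i^T)}$ with each $a_ia_i^T\in\mathcal{S}_{\mathbb{C}}^n$, the entire claim reduces to showing that every coefficient satisfies $\mu_i\ge 0$.

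To extract the sign of a single $\mu_j$, I would use that $r\le n$ permits the decomposing vectors $a_1,\dots,a_r$ to be linearly independent; a dual (biorthogonal) system then furnishes vectors $x_j\in\mathbb{C}^n$ with $a_i^Tx_j=\delta_{ij}$. Substituting $x_j$ into the nonnegativity condition isolates exactly one term, $0\le\mathcal{A}(x_j,x_j,\bar x_j,\bar x_j)=\mu_j$, and letting $j$ range over $1,\dots,r$ yields $\mu_i\ge 0$ for all $i$, completing the argument. As a consistency check one may note that this is equivalent to $M(\mathcal{A})\succeq 0$, since $rank_m(\mathcal{A})=rank(M(\mathcal{A}))$ and the matrix eigenvalues are exactly these coefficients in the orthogonal matrix decomposition.

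The hard part will be justifying the linear independence of $a_1,\dots,a_r$, which is precisely what the word ``generally'' signals. The bound $r\le n$ is necessary for independence but not sufficient: a tensor whose support lies in a proper subspace can force every CPS rank decomposition to use linearly dependent factors, because any collection of more vectors than the dimension of that support is automatically dependent, and then the isolation step breaks down. For tensors in general position, however, the factors are independent (indeed the Kruskal-type condition $2k_U\ge r+2$ of the earlier proposition then holds, giving uniqueness), and the conclusion is valid. I would therefore either adopt general position as the meaning of ``generally,'' or restrict to the case where the $r\le n$ factors span their range, flagging the lower-dimensional-support tensors as the genuine obstruction to a fully unconditional statement.
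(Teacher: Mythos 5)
Your proposal is correct and follows essentially the same route as the paper: take a CPS rank decomposition with $r\le n$, invoke generic linear independence of the factors (the paper cites Lemma 5.2 of \cite{comon2008symmetric} for exactly this), isolate each coefficient to conclude $\lambda_i\ge 0$, and deduce membership in $\mathbb{C}\vec{S}_+^{n^2\times n^2}$, with $HPSD_+^{4,n}$ following from Lemma \ref{lem:skew-sym}. You merely make explicit two steps the paper leaves implicit --- the biorthogonal vectors $x_j$ with $a_i^Tx_j=\delta_{ij}$ that isolate $\lambda_j$, and the genericity caveat behind the word ``generally'' --- which is a faithful filling-in rather than a different argument.
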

\begin{proof}
According to the assumption, there exists a CPS decomposition, $\mathcal{A}=\sum_{i=1}^r\lambda_ia_i^{\otimes^2}\otimes\bar{a}_i^{\otimes^2}$, where $\lambda_i\in\mathbb{R}$, $a_i\in\mathbb{C}^n$, for $i=1,2,\cdots,r$. Since $r\le n$, according to \citep[Lemma 5.2]{comon2008symmetric}, the vectors of the set $\{a_1,\cdots,a_r\}$ are generally independent. Combined with $\mathcal{A}\in CPS_+^{n^4}$, $\lambda_i\ge 0$ for $i=1,2,c\dots,r$. Thus $\langle X,\mathcal{A}X\rangle\ge 0$, for any $X\in\mathcal{S}_{\mathbb{C}}^n$. $\mathcal{A}\in\mathbb{C}\vec{S}_+^{n^2\times n^2}$ and $\mathcal{A}\in HPSD^{4,n}_+$.
\end{proof}
\section{Application examples}\label{sec:app}
At last, let us see several CPS tensors as examples for their decomposition, CP rank bound or positive semidefinitess, based on previous arguments. In the bellow, $\rm{tridiag}(a,b,c)$ represents a tridiagonal matrix with $a,~b,~c$ as the value in sub-diagonals in order. $\lambda_{min}(C)$ stands for the smallest eigenvalue of matrix $C$.
\subsection{CPS tensor from BEC}
Bose-Einstein condensate(BEC) is the phenomenon
that a large number of particles in boson system occupy the
quantum ground state at a temperature below a certain critical
value, and aggregate into a phenomenon of "large particles". The ground state of non-rotating Bose-Einstein condensate (BEC) is defined as the minimizer of the following energy functional minimization problem:
\begin{equation}\label{equ:energy}
\left\{
\begin{array}{lrc}
\min \quad E(\phi(\textbf{x})):= \int_{R^d}[\frac{1}{2}|\nabla\phi(\textbf{x})|^2+V(\textbf{x})|\phi(\textbf{x})|^2+\frac{\beta}{2}|\phi(\textbf{x})|^4]d\textbf{x} \\
{\rm s.t.}\quad \int_{\mathbb{R}^d}|\phi(\textbf{x})|^2d\textbf{x}=1, ~E(\phi)<\infty.
\end{array}
\right.
\end{equation}
where $\textbf{x}\in\mathbb{R}^d$ is the spatial coordinate vector, $V(\textbf{x})$ is an external trapping potential, and the given constant $\beta$ is the dimensionless interaction coefficient \citep{bao2013mathematical}. After a suitable discretization such as the finite difference, sine pseudospectral and fourier pseudospectral methods, \eqref{equ:energy} becomes a nonconvex quartic minimization problem with a single spherical constraint:
\begin{equation}\label{equ:different}
\left\{
\begin{array}{lrc}
\underset{x\in \mathbb R^n}{\min} \quad \frac{\alpha}{2}\sum\limits_{i=1}^{n} x_i^4+x^{T}Bx\\
{\rm s.t.} \quad \|x\|_2^2=1,
\end{array}
\right.
\end{equation}
where $B$ is a symmetric matrix, representing the sum of the discretized Laplacian operator and a positive diagonal matrix. Due to the spherical constraint, \eqref{equ:different} can be converted into the following equivalent form:
\begin{equation}\label{equ:CPS_BEC}
\left\{
\begin{array}{lrc}
\underset{x\in \mathbb R^n}{\min} \quad \frac{\alpha}{2}\langle\mathcal{I},x^{\otimes 4}\rangle+\frac{1}{2}\langle B\otimes I+I\otimes B,x^{\otimes 4}\rangle\\
{\rm s.t.} \quad \|x\|_2^2=1,
\end{array}
\right.
\end{equation}
where $\mathcal{I}\in \mathbb{R}^{n^4}$ is the diagonal tensor with diagonal entries equal to one and $I$ is the identity matrix. It is obvious that \eqref{equ:CPS_BEC} can be seen as the best rank-1 tensor approximation to a fourth-order real CPS tensor
\[\mathcal{F}=\alpha\mathcal{I}+\mathcal{A},\]
where
\[\mathcal{A}=B\otimes I+I\otimes B.\]
\begin{example}\label{exa:app1}
We take the space domain $D=[0,1]$ for $d=1$ with Dirichlet boundary conditions, $\beta>0$ and $V(x)=x^2/2$. Suppose \eqref{equ:different} is obtained by the finite difference method with $n=4$. Then $$B=(n+1)^2\cdot[tridiag(-1,2,-1)+diag([1^2,2^2,\cdots,n^2])],$$ and $\mathcal{F},~\mathcal{A}\in\mathbb{R}^{4^4}_{cps}$. We observe that $\mathcal{A}\in \mathbb{R}CPS_+^{n^4}\backslash\vec{S}^{n^2\times n^2}_+$, $rank(\mathcal{A})=16$ and $rank(\mathcal{F})\le 20$.
\end{example}
\begin{proof}
It is obvious that $\mathcal{A}\in \mathbb{R}CPS_+^{n^4}$ and $rank(M(\mathcal{A}))=2$. We can compute the orthogonal matrix decomposition of $\mathcal{A}$
\begin{equation*}
\mathcal{A}=218.7885E_1\otimes E_1-16.3885E_2\otimes E_2,
\end{equation*}
which implies that $\mathcal{A}\notin \vec{S}^{n^2\times n^2}_+$. The spectral decompositions for $E_1$ and $E_2$ are
\begin{equation*}
\begin{aligned}
E_1 = 0.6579p_1p_1^T+0.5486p_2p_2^T+0.3110p_3p_3^T+0.4115p_4p_4^T;\\
E_2 = 0.5099p_1p_1^T+0.1105p_2p_2^T-0.7579p_3p_3^T-0.3950p_4p_4^T.
\end{aligned}
\end{equation*}
Here $rank(E_1)=rank(E_2)=4$ and they contain the same eigenvectors coincidentally. Thus, $\mathcal{A}$ has a CP decomposition in the form of
\begin{equation}\label{equ:cp_decom}
\mathcal{A}=\sum\limits_{i=1}^4\sum\limits_{j=1}^4\lambda_{ij}p_i^{\otimes 2}\otimes p_j^{\otimes 2},~\lambda_{ij}\neq 0.
\end{equation}
We have $rank(\mathcal{A})\le16$. On the other hand, let $\hat{\mathcal{A}}$ be given as $\hat{\mathcal{A}}_{ijkl}=\mathcal{A}_{ikjl}$, then
\[M(\hat{\mathcal{A}})=\sum\limits_{i=1}^4\sum\limits_{j=1}^4\lambda_{ij}
(p_i\boxtimes p_j)(p_i\boxtimes p_j)^T.\]
From the orthogonality of $\{p_i\boxtimes p_j\}_{i,j}$, $rank(\mathcal{A})\ge rank(M(\hat{\mathcal{A}}))=16$. Thus, \\$rank(\mathcal{A})=16$ and \eqref{equ:cp_decom} is a CP rank decomposition for $\mathcal{A}$.

Since $\mathcal{I}$ has a CP rank decomposition as $\mathcal{I}=\sum_{i=1}^4e_i^{\otimes 4}$ where $e_i$ is the $ith$ identity vector, for $i=1,2,3,4$, we have $rank(\mathcal{F})\le 20$.
\end{proof}
\subsection{The general PSD cauchy tensor}
\begin{example}
Let $\mathcal{C}\in \mathbb{C}_{cps}^{n^4}$ be a Cauchy tensor \citep{chen2015positive}, defined by
\[\mathcal{C}_{ijkl}=\frac{1}{c_i+c_j+c_k+c_l},~i,j,k,l=1,2\cdots,n.\]
$c=(c_1,c_2,\cdots,c_n)^T\in\mathbb{R}^n$ is the generating vector of $\mathcal{C}$. Then $\mathcal{C}\in HPSD_+^{4,n}$ if and only if $c> 0$.
\end{example}
\begin{proof}
According to \citep[Theorem 2.1]{chen2015positive}, $\mathcal{C}\in \mathbb{R}CPS_+^{n^4}$ if and only if $c> 0$. Since $\mathcal{C}$ is real symmetric, $\mathcal{C}\in \mathbb{R}CPS_+^{n^4}$ is identity to $\mathcal{C}\in PSD^{4,n}_+$ according to Theorem \ref{thm:real_sym}. Combined with Corollary \ref{thm:mpsd_equal}, we obtain that $c> 0$ is equivalent to $\mathcal{C}\in HPSD_+^{4,n}$.
\end{proof}
\subsection{CPS tensor from quadratic eigenvalue problem}
The quadratic eigenvalue problem (QEP) \citep{tisseur2001quadratic} is to find scalars $\lambda$ and nonzero $x,y$ satisfying
\[(\lambda^2M+\lambda C+K)x=0,y^H(\lambda^2M+\lambda C+K)=0,\]
where $M,C,K$ are $n$ by $n$ complex matrices, $\lambda$ is an eigenvalue and $x$, $y$ are the corresponding right and left eigenvectors, respectively. The QEP has extensive applications in areas such as the dynamic analysis of mechanical systems in acoustics and linear stablility of flows in fluid mechanics.

One important class of QEPs is from overdamped systems. In structural mechanics, the system is said to be overdamped when the following overdamping condition is satisfied.
\begin{equation}\label{equ:overdamp}
\underset{\|x\|_2=1}{\min}\quad[(x^HCx)^2-4(x^HMx)(x^HKx)]>0,
\end{equation}
where $M,C$ are symmetric positive definite and $K$ is symmetric positive semidefinite. If a system is overdamped, the corresponding QEP will have many special properties including that all the eigenvalues of QEP are real and nonpositive and there is a gap between $n$ largest and $n$ smallest eigenvalues.

Clearly, \eqref{equ:overdamp} is equivalent to
\begin{equation*}
\underset{\|x\|_2=1}{\min}\left<\mathcal{A},\bar{x}\otimes x\otimes\bar{x}\otimes x\right>>0,
\end{equation*}
where $\mathcal{A}=C\otimes C-2(M\otimes K+K\otimes M)$ is a real CPS tensor.
\begin{example}\label{exa:quadraticeigen}
When $M=I$, $C=\tau~ \rm{tridiag}(-1,3,-1)$, $K=\mu~\rm{tridiag}(-1,3,-1)$, it models a connected damped mass-spring system  \citep{tisseur2001quadratic}. We choose $n=50$, $\tau=10$ and $\mu = 6$. In this case, $rank(\mathcal{A})=2500$, $\mathcal{A}\notin \vec{S}^{n^2\times n^2}_+$ and $\mathcal{A}\in \mathbb{R}CPS_+^{n^4}$. The system is overdamped.
\end{example}
\begin{proof}
It is obvious that $rank(M(\mathcal{A}))\le 2$. After computation, the orthogonal matrix decomposition of $\mathcal{A}$ is
\[\mathcal{A}=\lambda_1E_1\otimes E_1+\lambda_2E_2\otimes E_2,\]
where $\lambda_1=-13.7775$, $\lambda_2=51214$ and $E_1,E_2\in\mathcal{S}_{\mathbb{R}}^n$. So it is obvious that $\mathcal{A}\notin \vec{S}^{n^2\times n^2}_+$. Again through the spectral decomposition for $E_1$ and $E_2$, we find that $E_2\in \mathcal{S}_+^n$. Then for any $x\in \mathbb{C}^n$ with $\|x\|_2=1$,
\[\left<\mathcal{A},\bar{x}\otimes x\otimes\bar{x}\otimes x\right>\ge \lambda_2\lambda_{min}^2(E_2)+\lambda_1\|E_1\|_2^2=76.6691>0.\]
Thus the system is overdamped. It is obvious that $\mathcal{A}\in\mathbb{R}CPS_+^{n^4}$.

For the CP rank of $\mathcal{A}$, since $E_1E_2=E_2E_1$ and $rank(E_1)=rank(E_2)=50$, $E_1$ and $E_2$ contain the same eigenvectors. The rest of the proof is the same as the proof for Example \ref{exa:app1}.
\end{proof}
\begin{remark}
For Example \ref{exa:quadraticeigen}, if we directly estimate $\left<\mathcal{A},\bar{x}\otimes x\otimes\bar{x}\otimes x\right>$ by
\[\left<\mathcal{A},\bar{x}\otimes x\otimes\bar{x}\otimes x\right>\ge \lambda_{min}^2(C)-4\|M\|_2\|K\|_2=-19.1489<0,~\forall \|x\|_2=1,\]
we cannot obtain the system is overdamped. However, through the orthogonal matrix decomposition of $\mathcal{A}$ as the above proof, we obtain a much better estimation of the minimal value.
\end{remark}
\section{Conclusions and future work}\label{sec:conclude}
This paper studies fourth-order CPS tensors, CPS decomposition and orthogonal matrix decomposition for them, and different positive semidefiniteness based on different decompositions as well as their relationship. We note that for real CPS tensors, the decompositions might have some different properties from the complex case based on the property of real Hermitian decomposition. Orthogonal matrix decomposition is emphasized since they can be treated as the decomposition of the square unfolding matrix and preserving nice properties. Based on it, we offer a procedure to compute a CPS decomposition and therefore the CPS decomposability of CPS tensors is reobtained. It also gives a bound for the CP rank. Finally, we give the relationship of different positive semidefinite fourth-order CPS tensor cones, which were proposed in different literatures, in real and complex case. It also implies the relationship of the nonnegativity of quartic real-valued symmetric conjugate forms with SOS property.

Several questions need to be studied in the future. A basic question is how to determine CPS rank and compute the CPS rank decomposition for the other basis CPS tensors, just as the nice Hermitian rank decomposition for basis Hermitian tensors obtained by \cite{nie2020hermitian}. Decompositions for real CPS tensors that are not symmetric are of particular interest, which we only discuss intially and left the Conjecture \ref{conj:realcps}. We have discussed the relationship of different positive semideifnite CPS tensor cones. However, whether a CPS fourth-order tensor is general PSD or not, is still hard to judge. Another topic of interest is to study their relationship with the convex quartic conjugate complex forms.


%
%

\bibliographystyle{spbasic}      
\bibliography{ref}   

\end{document}